\newtheorem{theo}{Theorem}[section]
\newtheorem{prop}[theo]{Proposition}
\newtheorem{lemma}[theo]{Lemma}
\newtheorem{defn}[theo]{Definition}
\newtheorem{rem}[theo]{Remark}
\def\Ptwist{\unitlength.1em
  \begin{minipage}{15\unitlength}
    \begin{picture}(15,15)
     \put(0,7.5){\line(1,0){6}}
     \put(9,7.5){\line(1,0){6}}
      \put(4,9.5){\line(1,0){3}}
      \qbezier(7,9.5)(7,9.5)(9,5.5)
      \put(9, 5.5){\line(1,0){3}}
    \end{picture}
  \end{minipage}
}
\def\Ntwist{\unitlength.1em
  \begin{minipage}{15\unitlength}
    \begin{picture}(15,15)
     \put(0,7.5){\line(1,0){6}}
     \put(9,7.5){\line(1,0){6}}
      \put(4,5.5){\line(1,0){3}}
      \qbezier(7,5.5)(7,5.5)(9,9.5)
      \put(9, 9.5){\line(1,0){3}}
    \end{picture}
  \end{minipage}
}
\tikzset{->-/.style={decoration={
  markings,
  mark=at position .5 with {\arrow{>}}},postaction={decorate}}}
\tikzset{-<-/.style={decoration={
  markings,
  mark=at position .5 with {\arrow{<}}},postaction={decorate}}}
\begin{document}

\title[A topological interpretation of Viro's $\mathfrak{gl}(1\vert 1)$-Alexander polynomial]{A topological interpretation of Viro's $\mathfrak{gl}(1\vert 1)$-Alexander polynomial of a graph}
\author{Yuanyuan Bao}
\address{
Graduate School of Mathematical Sciences, University of Tokyo, 3-8-1 Komaba, Tokyo 153-8914, Japan
}
\email{bao@ms.u-tokyo.ac.jp}

\keywords{Alexander polynomial, trivalent graph, MOY-type relations, $\mathfrak{gl}(1\vert 1)$, Heegaard diagram, Fox calculus}
\subjclass[2010]{Primary 57M25, 57M15, 81R50}

\maketitle

\begin{abstract}
For an oriented trivalent graph $G$ without source or sink embedded in $S^3$, we prove that the $\mathfrak{gl}(1\vert 1)$-Alexander polynomial $\underline{\Delta}(G, c)$ defined by Viro satisfies a series of relations, which we call MOY-type relations in \cite{alex}. As a corollary we show that the Alexander polynomial $\Delta_{(G, c)}(t)$ studied in \cite{alex} coincides with $\underline{\Delta}(G, c)$ for a positive coloring $c$ of $G$, where $\Delta_{(G, c)}(t)$ is constructed from a certain regular covering space of the complement of $G$ in $S^3$ and it is the Euler characteristic of the Heegaard Floer homology of $G$ that we studied before. When $G$ is a plane graph, we provide a topological interpretation to the vertex state sum of $\underline{\Delta}(G, c)$ by considering a special Heegaard diagram of $G$ and the Fox calculus on the Heegaard surface.
\end{abstract}

\section{Introduction}
The Alexander polynomial of a knot was first studied by J. W. Alexander in 1920s. It is a well-known fact that it can be interpreted in several different ways. For example, it can be defined via the universal abelian covering space of the knot complement. There is a definition for it from the Burau representation of the braid group. By applying the representation theory of quantum groups (see J. Murakami \cite{MR1197048, MR1183500}, Kauffman and Saleur \cite{MR1133269}, Rozansky and Saleur \cite{MR1170953}, and Reshetikhin \cite{MR1223142}), people found that it is also a $\mathfrak{gl}(1\vert 1)$- and $\mathfrak{sl}(2)$-quantum invariant. In recent years this invariant has come to draw a lot of attention for the reason that it is the Euler characteristic of the knot Floer homology introduced by Ozsv{\'a}th-Szab{\'o} and Rasmussen independently. 

The colored Alexander polynomial of a link in $S^3$ was defined in \cite{MR1164114}, where the authors considered the $\mathcal{U}_q(\mathfrak{sl}(2))$-representation at $2n$-th root of unity. The classical Alexander polynomial corresponds to the $n=2$ case. Unlike the $n=2$ case, few results are known about the topological meaning of the colored Alexander polynomial for higher $n$'s. 

The extension of the Alexander polynomial to a trivalent graph as a quantum invariant is a natural thing to do, and has been studied many times in the literature. For instance, in \cite{MR2255851} Viro generalized the multi-variable Alexander polynomial (Conway function) to trivalent graphs equipped with admissible colorings and found a face state sum model for his generalization. His definition is based on the irreducible representations of quantum (super)groups $\mathcal{U}_q(\mathfrak{gl}(1\vert 1))$ and $\mathcal{U}_q(\mathfrak{sl}(2))$. In \cite{MR3073565} Costantino and Murakami defined the colored Alexander polynomial of a trivalent graph and constructed a face state sum model for this invariant by using the corresponding $6j$-symbols. Viro's $\mathcal{U}_q(\mathfrak{sl}(2))$-model corresponds to the $n=2$ case of Costantino and Murakami's theory.

In this paper, we provide a topological interpretation of Viro's $\mathfrak{gl}(1\vert 1)$-Alexander polynomial of a graph. 
For an oriented trivalent graph $G$ without source or sink embedded in $S^3$ and a positive coloring defined on it, we constructed a polynomial $\Delta_{(G, c)}(t)$ in \cite{alex} and studied a series of relations that it satisfies. The polynomial was defined by considering a certain Alexander module associated with a regular covering space of the complement of the graph in $S^3$. It was originally studied in \cite{bao}, where we studied the Heegaard Floer homology for a balanced bipartite graph with a proper orientation and showed that a multi-variable version of $\Delta_{(G, c)}(t)$ is the Euler characteristic of the homology. In this paper, we show that Viro's $\mathfrak{gl}(1\vert 1)$-Alexander polynomial $\underline{\Delta}(G, c)$ of a graph coincides with $\Delta_{(G, c)}(t)$. Precisely, in Section 3 we prove the following theorem.

\newtheorem*{maintheorem}{Theorem \ref{maintheorem}}
\begin{maintheorem}
For an oriented trivalent graph $G$ without source or sink, let $c$ be a coloring whose multiplicities are positive and weights are one. We have 
$$
\Delta_{(G, c)}(q^{-4}) =\displaystyle \frac{\prod_{\text{$v$: even type}} \{2c(v)\}_q }{(q^{2}-q^{-2})^{\vert V \vert-1}}\underline{\Delta}(G, c),
$$ where $\vert V \vert$ is the number of vertices in $G$.
\end{maintheorem}

We use two approaches to analyze the coincidence. The first approach is based on MOY-type relations. In \cite{alex} we showed that $\Delta_{(G,c)}(t)$ satisfies a series of relations, which we call MOY-type relations since they are analogous to MOY's relations in \cite{MR1659228}, and proved that these MOY-type relations characterize $\Delta_{(G,c)}(t)$. In Theorem \ref{moy} we prove that $\underline{\Delta}(G, c)$ satisfies an adapted version of the MOY-type relations. By comparing these relations with those for $\Delta_{(G,c)}(t)$, in Section 3 we prove Theorem \ref{maintheorem}.

It would be interesting to compare the relations in Theorem \ref{moy} with those in \cite{MR3470707}, which is based on quantum skew Howe duality. Apparently they are quite different from each other, since in \cite{MR3470707} the color of each edge stands for the degree of the exterior power of the defining representation of $\mathcal{U}_{q}(\mathfrak{gl}(1\vert 1))$. It is natural to expect a precise connection between them, which is unknown to the author.

The other approach is based on a special Heegaard diagram of $G$ when $G$ is a plane graph. In this case we show that the morphism around a trivalent vertex, which determines $\underline{\Delta}(G, c)$, can be obtained from the Fox calculus on the Heegaard diagram. The morphism was originally obtained by scaling the Clebsch-Gordan morphisms for irreducible $\mathcal{U}_{q}(\mathfrak{gl}(1\vert 1))$-modules of dimension $(1 \vert 1)$.

\vspace{5pt}\noindent{\bf Acknowledgements.} We would like to thank Hitoshi Murakami and Zhongtao Wu for helpful discussions and comments. We thank the anonymous referee for careful reading and many helpful suggestions.

\section{The MOY-type relations for Viro's $\mathfrak{gl}(1\vert 1)$-Alexander polynomial}
\subsection{Viro's $\mathfrak{gl}(1\vert 1)$-Alexander polynomial}
Viro \cite{MR2255851} defined a functor from the category of colored framed trivalent graph to the category of finite dimensional modules over the $q$-deformed universal enveloping algebra $\mathcal{U}_{q}(\mathfrak{gl}(1\vert 1))$. Using this functor, in Section 6 of \cite{MR2255851}, he constructed the $\mathfrak{gl}(1\vert 1)$-Alexander polynomial of a trivalent graph. Instead of recalling a full definition of the functor, we recall the definition and calculation of the polynomial.

\begin{rem}
Sartori's note \cite{MR3319619} is another commonly used reference about the structure and the finite-dimensional representations of $\mathcal{U}_{q}(\mathfrak{gl}(1\vert 1))$. Note that there exist minor differences of conventions between \cite{MR2255851} and \cite{MR3319619}. If we identify $E, G, Y, X$ in \cite{MR2255851} with $H_1+H_2, H_2, E, F$ in \cite{MR3319619} respectively, we see that the choices of coalgebras and the universal R-matrices are slightly different. In this paper, we follow Viro's conventions.
\end{rem}

For us, a graph will be an oriented trivalent graph without source or sink, embedded in $S^3$, possibly with loops and multi-edges.
Let $G$ be a graph, and
let $E$ be the set of edges and $V$ be the set of vertices of $G$. Consider a map which we call a coloring
\begin{eqnarray*}
c: E &\to& \mathbb{Z}\backslash \{0\}\oplus \mathbb{Z}\\
e &\mapsto& (j, J).
\end{eqnarray*} 
The first number $j$ is called the {\it multiplicity} and the second number $J$ is called the {\it weight}. 
Around a vertex, suppose the three edges adjacent to it are colored by $(j_1, J_1)$, $(j_2, J_2)$ and $(j_3, J_3)$. Let $\epsilon_i=-1$ if the $i$-th edge points toward the vertex and $\epsilon_i=1$ otherwise. The coloring $c$ needs to satisfy the following admissibility conditions. 
\begin{align*}
\text{Admissibility conditions} \hspace{2cm} \sum_{i=1}^{3} \epsilon_i j_i&=0,\hspace{7cm}\\
\sum_{i=1}^{3} \epsilon_i J_i&=-\prod_{i=1}^{3} \epsilon_i.
\end{align*}

The pair $(j, J)$ corresponds to two irreducible $\mathcal{U}_{q}(\mathfrak{gl}(1\vert 1))$-modules of dimension $(1 \vert 1)$, which are denoted by $(j, J)_{+}$ and $(j, J)_{-}$. These two modules are dual to each other. The module $(j, J)_{+}$ (resp. $(j, J)_{-}$) is generated by two elements $e_0$ (boson) and $e_1$ (fermion). For details of their definitions please see Appendix 1 of \cite{MR2255851}. 

A {\it framing} of $G$ is an oriented compact surface $F$ embedded in $S^3$ in which $G$ is sitting as a deformation retract. More precisely, in $F$ each vertex of $G$ is replaced by a disk where the vertex is the center, and each edge of $G$ is replaced by a strip $[0, 1]\times [0, 1]$ where $[0, 1]\times \{0, 1\}$ is attached to the boundaries of its adjacent vertex disks and $\{\frac{1}{2}\}\times [0, 1]$ is the given edge of $G$. See Fig. \ref{fig:e27} for an example.

\begin{figure}
	\centering
		\includegraphics[width=0.6\textwidth]{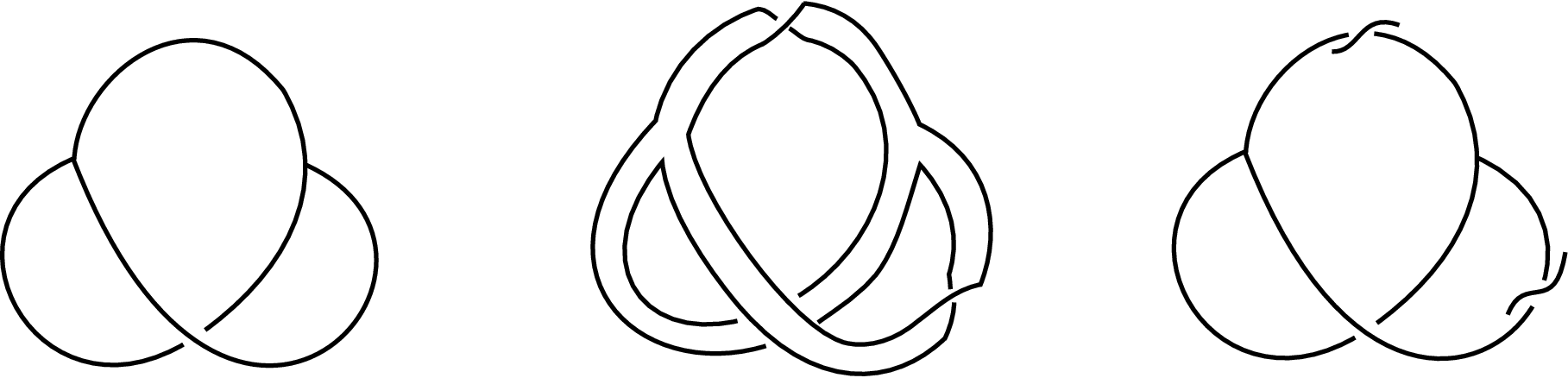}
	\caption{A framed graph and its graph diagram.}
	\label{fig:e27}
\end{figure}

A {\it framed graph} is a graph with a framing. By an {\it isotopy} of a framed graph we mean an isotopy of the graph in $S^{3}$ which extends to an isotopy of the framing. A graph diagram of $G$ in $\mathbb{R}^{2}$ can be equipped with a framing such that the tubular neighborhood of the graph diagram in $\mathbb{R}^{2}$ is an immersion of the framing. It is called the {\it blackboard framing}.

The difference between a generic framing and the blackboard framing can be represented on the diagram by introducing the half-twist symbols \Ptwist and \Ntwist, which mean a positive half twist and a negative half twist respectively at the fragment. Therefore we can use a graph diagram with \Ptwist and \Ntwist to represent a framed graph, as shown in Fig. \ref{fig:e27}. 

Now we review the definition of $\underline{\Delta}(G, c)$ for a framed graph $G$ with a coloring $c$. Choose 
a graph diagram of $G$ in $\mathbb{R}^2$. The diagram divides $\mathbb{R}^2$ into several regions, one of which is unbounded. Choose an edge of $G$ on the boundary of the unbounded region and cut the edge at a generic point. Suppose the color of the edge is $(j, J)$. Deform the graph diagram under isotopies of $\mathbb{R}^2$ to make it in a Morse position under a given orthogonal coordinate system of $\mathbb{R}^{2}$ so that the two endpoints created by cutting have heights zero and one and the critical points, the crossings, the half-twist symbols and the vertices of the diagram have different heights between zero and one. Namely after deformation the diagram can be divided into several pieces by horizontal lines so that each piece is a disjoint union of trivial vertical segments with one of the eight elements in Fig.~\ref{fig2}. Each piece connects a sequence of endpoints on its bottom to a sequence of endpoints on its top.

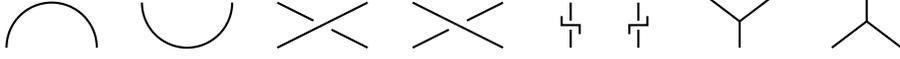
\begin{figure}
\begin{tikzpicture}[baseline=-0.65ex, thick, scale=0.6]
\draw (-1, 0) arc (0:180:1);
\draw (0, 1) arc (180:360:1);
\draw (3, 0) -- (5, 1);
\draw (5, 0) -- (4.2, 0.4);
\draw (3, 1) -- (3.8, 0.6);
\draw (8, 0) -- (6, 1);
\draw (8, 1) -- (7.2, 0.6);
\draw (6, 0) -- (6.8, 0.4);
\draw (9.5, 0) -- (9.5, 0.4);
\draw (9.5, 1) -- (9.5, 0.6);
\draw (9.3, 0.7) -- (9.3, 0.5) -- (9.7, 0.5) --(9.7, 0.3);
\draw (11, 0) -- (11, 0.4);
\draw (11, 1) -- (11, 0.6);
\draw (10.8, 0.3) -- (10.8, 0.5) -- (11.2, 0.5) --(11.2, 0.7);
\end{tikzpicture}\hspace{6mm}
\begin{tikzpicture}[baseline=-0.65ex, thick, scale=0.7]
\draw  (0, 0)  to (0,0.5);
\draw   (0,0.5)   to  (0.66,1);
\draw   (0,0.5)  to  (-0.66,1);
\end{tikzpicture}\hspace{6mm}
\begin{tikzpicture}[baseline=-0.65ex, thick, scale=0.7]
\draw  (0, 0.5)  to (0,1);
\draw   (0,0.5)   to  (0.66,0);
\draw   (0,0.5)   to  (-0.66,0);
\end{tikzpicture}
\caption{Critical points, crossings, half-twist symbols and vertices. Any orientations, without creating sources or sinks, are allowed.}

\label{fig2}
\end{figure}

Under Viro's functor, each piece, read from bottom to top,  is mapped to a morphism between tensor products of irreducible $\mathcal{U}_q(\mathfrak{gl}(1\vert 1))$-modules of dimension $(1 \vert 1)$. Suppose the sequence of endpoints for a given piece on the bottom (resp. top) is $(p_1, \cdots, p_k)$ for $k\geq 1$ (resp. $(q_1, \cdots, q_l)$ for $l\geq 1$) where the subindices respect the $x$-coordinates of the endpoints. Then $(p_1, \cdots, p_k)$ (resp. $(q_1, \cdots, q_l)$) corresponds to the tensor product $(j_1, J_1)_{\epsilon_1}\otimes \cdots \otimes (j_k, J_k)_{\epsilon_k}$ (resp. $(i_1, I_1)_{\epsilon_1}\otimes \cdots \otimes (i_l, I_l)_{\epsilon_l}$), where $(j_r, J_r)$ (resp. $(i_s, I_s)$) is the color of the edge containing $p_r$ (resp. $q_s$) and $\epsilon_r=+$ (resp. $\epsilon_s=+$) when the edge points upward and $\epsilon_r=-$ (resp. $\epsilon_s=-$) otherwise for $1\leq r\leq k$ (resp. $1\leq s\leq l$). Then the morphism is from $(j_1, J_1)_{\epsilon_1}\otimes \cdots \otimes (j_k, J_k)_{\epsilon_k}$ to 
$(i_1, I_1)_{\epsilon_1}\otimes \cdots \otimes (i_l, I_l)_{\epsilon_l}$. 

The morphism is defined in the language of Boltzmann weights. Simply speaking, each module $(j_r, J_r)_{\epsilon_r}$ (resp. $(i_s, I_s)_{\epsilon_s}$) has two generators $e_0$ (boson) and $e_1$ (fermion), and therefore 
$(j_1, J_1)_{\epsilon_1}\otimes \cdots \otimes (j_k, J_k)_{\epsilon_k}$ (resp. $(i_1, I_1)_{\epsilon_1}\otimes \cdots \otimes (i_l, I_l)_{\epsilon_l}$) is generated by $\{\otimes_{r=1}^{k}e_{\delta_r}\}_{\delta_r=0, 1}$ (resp. $\{\otimes_{s=1}^{l}e_{\delta_s}\}_{\delta_s=0, 1}$). The morphism is represented by a matrix under the above choice of generators, and the Boltzmann weights are the entries of the matrix.

The composition of two pieces (attaching them by identifying the top of the first piece with the bottom of the second piece) corresponds to the composition of their morphisms for $\mathcal{U}_q(\mathfrak{gl}(1\vert 1))$-modules. As a consequence, the graph diagram in Morse position with two endpoints of heights zero and one is mapped to a morphism from $(j, J)_+$ to $(j, J)_+$ (or $(j, J)_-$ to $(j, J)_-$ depending the orientation of $G$ at the endpoints), which acts as the multiplication of a rational function of $q$ (\cite[5.1.A]{MR2255851}). Recall that $(j, J)$ is the color of the edge which was cut. Then dividing the rational function by $q^{2j}-q^{-2j}$ we get $\underline{\Delta}(G, c)$. To calculate $\underline{\Delta}(G, c)$, we can take the following steps provided by Viro in \cite[Section 6.4]{MR2255851}.


\begin{table}
\begin{center}
\begin{tabular} {|c|c|c|c|c|} \hline
&\begin{tikzpicture}[baseline=-0.65ex, thick]
\draw [dotted] (0, 0) [<-] to [out=270,in=180] (0.5, -0.5) to [out=0,in=270] (1,0);
\draw (1.7, -0.3) node {$(j, J)$};
\draw (0.7, -1) node {$+1$};
\end{tikzpicture}&
\begin{tikzpicture}[baseline=-0.65ex, thick]
\draw  (0, 0) [<-] to [out=270,in=180] (0.5, -0.5) to [out=0,in=270] (1,0);
\draw (1.7, -0.3) node {$(j, J)$};
\draw (0.7, -1) node {$-q^{2j}$};
\end{tikzpicture}&
\begin{tikzpicture}[baseline=-0.65ex, thick]
\draw [dotted] (0, 0) to [out=270,in=180] (0.5, -0.5)  [->] to [out=0,in=270] (1,0);
\draw (1.7, -0.3) node {$(j, J)$};
\draw (0.7, -1) node {$q^{-2j}$};
\end{tikzpicture}
&
\begin{tikzpicture}[baseline=-0.65ex, thick]
\draw (0, 0) to [out=270,in=180] (0.5, -0.5) [->] to [out=0,in=270]  (1,0);
\draw (1.7, -0.3) node {$(j, J)$};
\draw (0.7, -1) node {$1$};
\end{tikzpicture}\\
\hline
&\begin{tikzpicture}[baseline=-0.65ex, thick]
\draw [dotted] (0, 0) [<-] to [out=90,in=180] (0.5, 0.5) to [out=0,in=90] (1,0);
\draw (1.7, 0.3) node {$(j, J)$};
\draw (0.7, -0.5) node {$+1$};
\end{tikzpicture}&
\begin{tikzpicture}[baseline=-0.65ex, thick]
\draw  (0, 0) [<-] to [out=90,in=180] (0.5, 0.5) to [out=0,in=90] (1,0);
\draw (1.7, 0.3) node {$(j, J)$};
\draw (0.7, -0.5) node {$-q^{-2j}$};
\end{tikzpicture}&
\begin{tikzpicture}[baseline=-0.65ex, thick]
\draw [dotted] (0, 0)  to [out=90,in=180] (0.5, 0.5) [->] to [out=0,in=90] (1,0);
\draw (1.7, 0.3) node {$(j, J)$};
\draw (0.7, -0.5) node {$q^{2j}$};
\end{tikzpicture}
&
\begin{tikzpicture}[baseline=-0.65ex, thick]
\draw  (0, 0)  to [out=90,in=180] (0.5, 0.5) [->] to [out=0,in=90] (1,0);
\draw (1.7, 0.3) node {$(j, J)$};
\draw (0.7, -0.5) node {$1$};
\end{tikzpicture}\\
\hline
\hline
& \begin{tikzpicture}[baseline=-0.65ex, thick, scale=0.6]
\draw [dotted] (0, -1) -- (0, 0);
\draw [dotted] (0, 0.3)  -- (0, 1);
\draw (0.2, 0.5) -- (0.2, 0.2) -- (-0.2, 0.1) -- (-0.2, -0.2);
\draw (1.2, 0.8) node {$(j, J)$};
\draw (0, -1.5) node {$q^{-jJ}$};
\end{tikzpicture}
&
\begin{tikzpicture}[baseline=-0.65ex, thick, scale=0.6]
\draw  (0, -1) -- (0, 0);
\draw  (0, 0.3)  -- (0, 1);
\draw (0.2, 0.5) -- (0.2, 0.2) -- (-0.2, 0.1) -- (-0.2, -0.2);
\draw (1.2, 0.8) node {$(j, J)$};
\draw (0, -1.5) node {$q^{-jJ}$};
\end{tikzpicture}
&
\begin{tikzpicture}[baseline=-0.65ex, thick, scale=0.6]
\draw [dotted] (0, -1) -- (0, 0);
\draw [dotted] (0, 0.3)  -- (0, 1);
\draw (-0.2, 0.5) -- (-0.2, 0.2) -- (0.2, 0.1) -- (0.2, -0.2);
\draw (1.2, 0.8) node {$(j, J)$};
\draw (0, -1.5) node {$q^{jJ}$};
\end{tikzpicture}
&
\begin{tikzpicture}[baseline=-0.65ex, thick, scale=0.6]
\draw  (0, -1) -- (0, 0);
\draw  (0, 0.3)  -- (0, 1);
\draw (-0.2, 0.5) -- (-0.2, 0.2) -- (0.2, 0.1) -- (0.2, -0.2);
\draw (1.2, 0.8) node {$(j, J)$};
\draw (0, -1.5) node {$q^{jJ}$};
\end{tikzpicture}
\\
\hline
\hline
\vspace{-2mm}&&&&\\
     & \begin{tikzpicture}[baseline=-0.65ex, thick]
\draw [dotted] (0, 0)  to (1,1);
\draw  [dotted] (1,0)  to  (0,1);
\end{tikzpicture}
     & \begin{tikzpicture}[baseline=-0.65ex, thick]
\draw [dotted] (0, 0)  to (1,1);
\draw  (1,0)  to  (0,1);
\end{tikzpicture}
      & \begin{tikzpicture}[baseline=-0.65ex, thick]
\draw  (0, 0)  to (1,1);
\draw  [dotted] (1,0)  to  (0,1);
\end{tikzpicture}
& \begin{tikzpicture}[baseline=-0.65ex, thick]
\draw  (0, 0)  to (1,1);
\draw  (1,0)  to  (0,1);
\end{tikzpicture}
       \\ \hline 
 \vspace{-2mm}     &&&&\\
\begin{tikzpicture}[baseline=-0.65ex, thick]
\draw  (0, 0) [->]  to (1,1);
\draw  (1,0)  to  (0.6,0.4);
\draw  (0.4,0.6) [->]  to  (0,1);
\draw (1.5, 0) node {$(j, J)$};
\draw (-0.5, 0) node {$(i, I)$};
\end{tikzpicture}
 & $q^{-iJ-jI}q^{i+j}$   & $q^{-iJ-jI}q^{j-i}$ & $q^{-iJ-jI}q^{i-j}$ &$-q^{-iJ-jI}q^{-i-j}$ \\ 
\hline 
 \vspace{-2mm}     &&&&\\
\begin{tikzpicture}[baseline=-0.65ex, thick]
\draw  (0.6, 0.6) [->]  to (1,1);
\draw  (1,0) [->] to  (0,1);
\draw  (0,0)  to  (0.4,0.4);
\draw (1.5, 0) node {$(j, J)$};
\draw (-0.5, 0) node {$(i, I)$};
\end{tikzpicture}
 & $q^{iJ+jI}q^{-i-j}$   & $q^{iJ+jI}q^{i-j}$ & $q^{iJ+jI}q^{j-i}$ &$-q^{iJ+jI}q^{i+j}$ \\ 
\hline 
\hline
\vspace{-2mm}&&&&\\
     & \begin{tikzpicture}[baseline=-0.65ex, thick]
\draw (0, 0)  to (0.5,0.5);
\draw [dotted] (0.5, 0.5)  to (1,1);
\draw (0.5, 0.5) to (0, 1);
\draw  [dotted] (1,0)  to  (0.5,0.5);
\end{tikzpicture}
     & \begin{tikzpicture}[baseline=-0.65ex, thick]
\draw [dotted] (0, 0)  to (0.5,0.5);
\draw  (0.5, 0.5)  to (1,1);
\draw [dotted] (0.5, 0.5) to (0, 1);
\draw  (1,0)  to  (0.5,0.5);
\end{tikzpicture}
      & \begin{tikzpicture}[baseline=-0.65ex, thick]
\draw (0, 0)  to (0.5,0.5);
\draw [dotted]  (0.5, 0.5)  to (1,1);
\draw [dotted] (0.5, 0.5) to (0, 1);
\draw  (1,0)  to  (0.5,0.5);
\end{tikzpicture}
& \begin{tikzpicture}[baseline=-0.65ex, thick]
\draw  [dotted] (0, 0)  to (0.5,0.5);
\draw  (0.5, 0.5)  to (1,1);
\draw   (0.5, 0.5) to (0, 1);
\draw  [dotted] (1,0)  to  (0.5,0.5);
\end{tikzpicture}
       \\ \hline 
 \vspace{-2mm}     &&&&\\
\begin{tikzpicture}[baseline=-0.65ex, thick]
\draw  (0, 0) [->]  to (1,1);
\draw  (1,0)  to  (0.6,0.4);
\draw  (0.4,0.6) [->]  to  (0,1);
\draw (1.5, 0) node {$(j, J)$};
\draw (-0.5, 0) node {$(i, I)$};
\end{tikzpicture}
 & $0$   & $\displaystyle \frac{q^{4i}-1}{q^{iJ+jI+i+j}}$ & $0$ &$0$ \\ 
\hline 
 \vspace{-2mm}     &&&&\\
\begin{tikzpicture}[baseline=-0.65ex, thick]
\draw  (0.6, 0.6) [->]  to (1,1);
\draw  (1,0) [->] to  (0,1);
\draw  (0,0)  to  (0.4,0.4);
\draw (1.5, 0) node {$(j, J)$};
\draw (-0.5, 0) node {$(i, I)$};
\end{tikzpicture}
 & $\displaystyle \frac{1-q^{4j}}{q^{-iJ-jI+i+j}}$   & $0$ & $0$ &$0$ \\ 
\hline
\end{tabular}
\vspace{3mm}
\caption{Boltzmann weights for critical points, half-twist symbols and two types of crossings from Viro's Table 1.}
\label{viro1}
\end{center}
\end{table}

\begin{prop}[vertex state sum representation in \cite{MR2255851}]
\label{sum} 
$\underline{\Delta}(G, c)$ can be calculated as follows.
\begin{enumerate}
\item Choose an orthogonal coordinate system for $\mathbb{R}^2$ and consider a graph diagram of $G$ in a Morse position in $\mathbb{R}^2$. Choose a generic point $\delta$ on the leftmost edge of $G$, which we call initial point. Suppose the color of the edge is $(j, J)$. 
\item Assign the generator $e_0$ to the edge with initial point, and assign $e_0$ or $e_1$ to each of the other edges. Such an assignment is called a state. In a state, if an edge is assigned with $e_0$ (resp. $e_1$), we represent it by a dotted (resp. solid) line, as in Fig. \ref{fig3}.
\item For each state, take the product of the Boltzmann weights at the critical points of the diagram, the crossings, the half-twist symbols, and the vertices of the graph. The Boltzmann weights for all possible assignments are defined in Tables 1 and 2 of \cite{MR2255851}. See Tables \ref{viro1} and \ref{viro2} for part of the data which we will use later.
\item Take the sum of the products over all the states.  Multiplying the sum by $ \frac{q^{(-1)^{\theta}2j}}{q^{2j}-q^{-2j}}$ we get $\underline{\Delta}(G, c)$, where $\theta=0$ (resp. $1$) if the edge segment around $\delta$ points downward (resp. upward). 
\end{enumerate}

\end{prop}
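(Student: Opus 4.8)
The plan is to read $\underline{\Delta}(G,c)$ off from its functorial definition and then expand the resulting morphism in the boson/fermion basis. Recall that, after the chosen edge is cut and the diagram is put in Morse position, Viro's functor sends it to an endomorphism $\Phi$ of the module $(j,J)_{\pm}$ attached to the two created endpoints, and that by \cite[5.1.A]{MR2255851} this $\Phi$ acts as multiplication by a rational function $\mu(q)$, with $\underline{\Delta}(G,c)=\mu(q)/(q^{2j}-q^{-2j})$. Since $\Phi$ is scalar, its coefficient is recovered from a single matrix entry; as the cut edge carries $e_0$, the quantity to compute is $\langle e_0\vert \Phi\vert e_0\rangle$. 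The whole task is then to rewrite this matrix entry as the advertised sum of products of Boltzmann weights.

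First I would slice the diagram along the horizontal lines separating its elementary events, so that $\Phi=\phi_n\circ\cdots\circ\phi_1$, where each $\phi_k$ is the identity on the trivial vertical segments tensored with the morphism of exactly one piece from Fig.~\ref{fig2}. Next I would insert, between consecutive slices, a complete set of basis states of the intermediate tensor product; expanding $\langle e_0\vert\phi_n\circ\cdots\circ\phi_1\vert e_0\rangle$ in this way produces a sum, taken over all assignments of $e_0$ or $e_1$ to every strand segment at every intermediate height, of products of matrix entries of the $\phi_k$. Each such entry factors as the Boltzmann weight of the single nontrivial piece in that slice, evaluated on the labels carried by its strands, times factors $0$ or $1$ coming from the identity morphisms on the remaining vertical segments.

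The combinatorial heart is then to match the nonzero terms with states. The identity factors vanish unless the label is constant across each trivial segment, so a nonvanishing labeling is locally constant away from the elementary pieces; moreover the entries of Table~\ref{viro1} show that critical points and the half-twists \Ptwist, \Ntwist preserve the label of their single strand, so a label can only be exchanged at a crossing or redistributed at a vertex. Consequently a nonvanishing labeling is exactly a state in the sense of the proposition, with the cut edge carrying $e_0$, and its contribution is precisely the product of the Boltzmann weights at all critical points, crossings, half-twists, and vertices. Summing over states therefore computes $\mu(q)$ up to an overall monomial.

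The last step, and the point I expect to require the most care, is the normalization. Reading the scalar off from the $(1,1)$-tangle, that is from the single entry $\langle e_0\vert\Phi\vert e_0\rangle$, differs from the scalar entering Viro's formula by a framing/pivotal factor at the cut point: according to whether the strand at the initial point $\delta$ points downward or upward one picks up a factor $q^{\mp 2j}$, which is exactly compensated by the prefactor $q^{(-1)^{\theta}2j}$ in step (4), and dividing by $q^{2j}-q^{-2j}$ then returns $\underline{\Delta}(G,c)$. The main obstacle is thus twofold: keeping the bookkeeping of the single off-diagonal crossing weight (the $e_0\otimes e_1\leftrightarrow e_1\otimes e_0$ term) consistent with the labeling — which is immediate for a crossingless, in particular planar, diagram but needs the strand-segment refinement in general — and pinning down the exact power of $q$ produced when the scalar is extracted. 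Independence of the final expression from the auxiliary choices (coordinate system, Morse position, and cut edge) is inherited from the functoriality underlying Viro's construction.
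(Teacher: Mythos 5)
The paper offers no proof of this proposition: it is Viro's vertex state sum representation, imported from \cite[Section 6.4]{MR2255851} (note the attribution in the proposition's title and the sentence preceding it), so there is no internal argument to compare yours against. Judged on its own terms, your outline is the standard and essentially correct derivation: slice the Morse-position diagram into elementary pieces, insert complete boson/fermion bases between consecutive slices, identify the surviving labelings with states, and then fix the normalization. This is how the result is established in Viro's paper, and the identification of matrix entries with Boltzmann weights is exactly the content of Tables \ref{viro1} and \ref{viro2}.

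Two points deserve more care than your sketch gives them. First, your claim that a nonvanishing labeling ``is exactly a state in the sense of the proposition'' sits uneasily with reading states as assignments to edges of the abstract graph $G$: the last two rows of Table \ref{viro1} record nonzero Boltzmann weights (e.g.\ $(q^{4i}-1)q^{-iJ-jI-i-j}$) for configurations in which a strand \emph{changes} its label as it passes through a crossing, so in general the states must assign labels to the arcs of the diagram cut at all special points, and the label is only guaranteed constant along an arc, not along an edge of $G$. You flag this (``strand-segment refinement''), and it is vacuous for the planar, crossingless diagrams to which the proposition is applied in Section 4.1, but for the proposition as stated it is not an optional refinement. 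Second, the factor $q^{(-1)^{\theta}2j}$ is not a correction to $\langle e_0\vert\Phi\vert e_0\rangle$ --- that entry equals the scalar $\mu(q)$ on the nose, since $\Phi=\mu(q)\cdot\mathrm{id}$ --- but rather compensates for the cup and cap weights accumulated on the portion of the distinguished edge that closes the $(1,1)$-tangle back up into the diagram of $G$ over which the state sum is actually taken: an $e_0$-labeled strand of color $j$ picks up $q^{\pm 2j}$ from Table \ref{viro1} per full turn, with the sign of the exponent governed by the orientation at $\delta$. Your bookkeeping lands on the right factor, but for that reason rather than as a ``framing/pivotal factor at the cut point.''
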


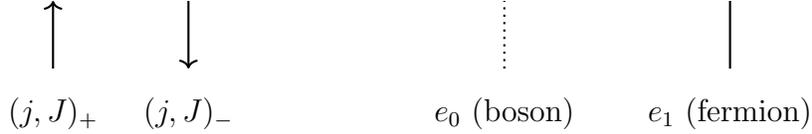
\begin{figure}
\centering
\begin{tikzpicture}[baseline=-0.65ex, thick, scale=0.6]
\draw (0, -1)  [->] to (0, 0.5);
\draw (3, -1)  [<-] to (3, 0.5);
\draw (0,-2) node {$(j, J)_{+}$};
\draw (3,-2) node {$(j, J)_{-}$};
\draw [dotted] (10, -1)  -- (10, 0.5);
\draw (15, -1)  -- (15, 0.5);
\draw (10,-2) node {$e_0$ (boson)};
\draw (15,-2) node {$e_1$ (fermion)};
\end{tikzpicture}
\caption{Under the coloring $c$, each edge corresponds to an irreducible $\mathcal{U}_{q}(\mathfrak{gl}(1\vert 1))$-module. In a state, if an edge is assigned with $e_0$ (resp. $e_1$), we represent it by a dotted (resp. solid) line.}
\label{fig3}
\end{figure}

\begin{rem}
Viro defined a multi-variable version of the Alexander polynomial in \cite[Section 6]{MR2255851}, where each edge is colored by an irreducible module over a subalgebra $U^1$ of $\mathcal{U}_{q}(\mathfrak{gl}(1\vert 1))$. Here we consider a single variable version. The Boltzmann weights for the single variable version are given in Tables 1 and 2 of \cite{MR2255851}. 
\end{rem}

\begin{table}
\begin{tabular}{|c|c|c|c|c|} \hline
 \vspace{-3mm}     &&&&\\
& \begin{tikzpicture}[baseline=-0.65ex, thick]
\draw [dotted] (0, -0.5)  to (0,0);
\draw  [dotted] (0,0)   to  (0.66,0.5);
\draw  [dotted] (0,0)   to  (-0.66,0.5);
\end{tikzpicture}
&
\begin{tikzpicture}[baseline=-0.65ex, thick]
\draw [dotted] (0, -0.5)  to (0,0);
\draw   (0,0)   to  (0.66,0.5);
\draw   (0,0)   to  (-0.66,0.5);
\end{tikzpicture}
&
\begin{tikzpicture}[baseline=-0.65ex, thick]
\draw  (0, -0.5)  to (0,0);
\draw  [dotted] (0,0)   to  (0.66,0.5);
\draw   (0,0)   to  (-0.66,0.5);
\end{tikzpicture}
&
\begin{tikzpicture}[baseline=-0.65ex, thick]
\draw  (0, -0.5)  to (0,0);
\draw   (0,0)   to  (0.66,0.5);
\draw  [dotted] (0,0)   to  (-0.66,0.5);
\end{tikzpicture}\\ 
 \vspace{-3mm}     &&&&\\
 \hline
\begin{tikzpicture}[baseline=-0.65ex, thick]
\draw  (0, -0.5) [->-] to (0,0);
\draw   (0,0) [->]  to  (0.66,0.5);
\draw  (0,0) [->]  to  (-0.66,0.5);
\draw (-0.9, 0.5) node {$i$};
\draw (0.9, 0.5) node {$j$};
\draw (0.3, -0.5) node {$k$};
\end{tikzpicture} &$q^{2k}-q^{-2k}$&$0$&$(q^{2i}-q^{-2i})q^{-2j}$&$q^{2j}-q^{-2j}$\\
\hline
\begin{tikzpicture}[baseline=-0.65ex, thick]
\draw  (0, -0.5) [->-] to (0,0);
\draw   (0,0) [-<-]  to  (0.66,0.5);
\draw  (0,0) [->]  to  (-0.66,0.5);
\draw (-0.9, 0.5) node {$i$};
\draw (0.9, 0.5) node {$j$};
\draw (0.3, -0.5) node {$k$};
\end{tikzpicture} &$1$&$-q^{2i}$&$1$&$0$\\
\hline
\begin{tikzpicture}[baseline=-0.65ex, thick]
\draw  (0, -0.5) [<-] to (0,0);
\draw   (0,0) [-<-]  to  (0.66,0.5);
\draw  (0,0) [->]  to  (-0.66,0.5);
\draw (-0.9, 0.5) node {$i$};
\draw (0.9, 0.5) node {$j$};
\draw (0.3, -0.5) node {$k$};
\end{tikzpicture} &$q^{2j}-q^{-2j}$&$-(q^{2i}-q^{-2i})q^{2j}$&$0$&$q^{2k}-q^{-2k}$\\
\hline
\begin{tikzpicture}[baseline=-0.65ex, thick]
\draw  (0, -0.5) [->-] to (0,0);
\draw   (0,0) [->]  to  (0.66,0.5);
\draw  (0,0) [-<-]  to  (-0.66,0.5);
\draw (-0.9, 0.5) node {$i$};
\draw (0.9, 0.5) node {$j$};
\draw (0.3, -0.5) node {$k$};
\end{tikzpicture} &$q^{-2i}$&$1$&$0$&$1$\\
\hline
\begin{tikzpicture}[baseline=-0.65ex, thick]
\draw  (0, -0.5) [<-] to (0,0);
\draw   (0,0) [->]  to  (0.66,0.5);
\draw  (0,0) [-<-]  to  (-0.66,0.5);
\draw (-0.9, 0.5) node {$i$};
\draw (0.9, 0.5) node {$j$};
\draw (0.3, -0.5) node {$k$};
\end{tikzpicture} &$(q^{2i}-q^{-2i})q^{-2j}$&$q^{2j}-q^{-2j}$&$q^{2k}-q^{-2k}$&$0$\\
\hline
\begin{tikzpicture}[baseline=-0.65ex, thick]
\draw  (0, -0.5) [<-] to (0,0);
\draw   (0,0) [-<-]  to  (0.66,0.5);
\draw  (0,0) [-<-]  to  (-0.66,0.5);
\draw (-0.9, 0.5) node {$i$};
\draw (0.9, 0.5) node {$j$};
\draw (0.3, -0.5) node {$k$};
\end{tikzpicture} &$1$&$0$&$1$&$q^{-2i}$\\
\hline
\hline
 \vspace{-3mm}     &&&&\\
& \begin{tikzpicture}[baseline=-0.65ex, thick]
\draw [dotted] (0, 0)  to (0,0.5);
\draw  [dotted] (0,0)   to  (0.66,-0.5);
\draw [dotted]  (0,0)   to  (-0.66,-0.5);
\end{tikzpicture}
&
\begin{tikzpicture}[baseline=-0.65ex, thick]
\draw [dotted] (0, 0)  to (0,0.5);
\draw   (0,0)   to  (0.66,-0.5);
\draw   (0,0)   to  (-0.66,-0.5);
\end{tikzpicture}
&
\begin{tikzpicture}[baseline=-0.65ex, thick]
\draw  (0, 0)  to (0,0.5);
\draw  [dotted] (0,0)   to  (0.66,-0.5);
\draw   (0,0)   to  (-0.66,-0.5);
\end{tikzpicture}
&
\begin{tikzpicture}[baseline=-0.65ex, thick]
\draw  (0, 0)  to (0,0.5);
\draw   (0,0)   to  (0.66,-0.5);
\draw [dotted]  (0,0)   to  (-0.66,-0.5);
\end{tikzpicture}\\ 
 \vspace{-3mm}     &&&&\\
 \hline
\begin{tikzpicture}[baseline=-0.65ex, thick]
\draw  (0, 0) [->] to (0,0.5);
\draw   (0,0) [-<-]  to  (0.66,-0.5);
\draw   (0,0) [-<-]  to  (-0.66,-0.5);
\draw (-0.9, -0.5) node {$i$};
\draw (0.9, -0.5) node {$j$};
\draw (0.3, 0.5) node {$k$};
\end{tikzpicture} &$1$&$0$&$1$&$q^{2i}$\\
\hline
\begin{tikzpicture}[baseline=-0.65ex, thick]
\draw  (0, 0) [->] to (0,0.5);
\draw   (0,0) [->]  to  (0.66,-0.5);
\draw   (0,0) [-<-]  to  (-0.66,-0.5);
\draw (-0.9, -0.5) node {$i$};
\draw (0.9, -0.5) node {$j$};
\draw (0.3, 0.5) node {$k$};
\end{tikzpicture} &$(q^{2i}-q^{-2i})q^{2j}$&$q^{2j}-q^{-2j}$&$q^{2k}-q^{-2k}$&$0$\\
\hline
\begin{tikzpicture}[baseline=-0.65ex, thick]
\draw  (0, 0) [-<-] to (0,0.5);
\draw   (0,0) [->]  to  (0.66,-0.5);
\draw   (0,0) [-<-]  to  (-0.66,-0.5);
\draw (-0.9, -0.5) node {$i$};
\draw (0.9, -0.5) node {$j$};
\draw (0.3, 0.5) node {$k$};
\end{tikzpicture} &$q^{2i}$&$1$&$0$&$1$\\
\hline
\begin{tikzpicture}[baseline=-0.65ex, thick]
\draw  (0, 0) [->] to (0,0.5);
\draw   (0,0) [-<-]  to  (0.66,-0.5);
\draw   (0,0) [->]  to  (-0.66,-0.5);
\draw (-0.9, -0.5) node {$i$};
\draw (0.9, -0.5) node {$j$};
\draw (0.3, 0.5) node {$k$};
\end{tikzpicture} &$q^{2j}-q^{-2j}$&$-(q^{2i}-q^{-2i})q^{-2j}$&$0$&$q^{2k}-q^{-2k}$\\
\hline
\begin{tikzpicture}[baseline=-0.65ex, thick]
\draw  (0, 0) [-<-] to (0,0.5);
\draw   (0,0) [-<-]  to  (0.66,-0.5);
\draw   (0,0) [->]  to  (-0.66,-0.5);
\draw (-0.9, -0.5) node {$i$};
\draw (0.9, -0.5) node {$j$};
\draw (0.3, 0.5) node {$k$};
\end{tikzpicture} &$1$&$-q^{-2i}$&$1$&$0$\\
\hline
\begin{tikzpicture}[baseline=-0.65ex, thick]
\draw  (0, 0) [-<-] to (0,0.5);
\draw   (0,0) [->]  to  (0.66,-0.5);
\draw   (0,0) [->]  to  (-0.66,-0.5);
\draw (-0.9, -0.5) node {$i$};
\draw (0.9, -0.5) node {$j$};
\draw (0.3, 0.5) node {$k$};
\end{tikzpicture} &$q^{2k}-q^{-2k}$&$0$&$(q^{2i}-q^{-2i})q^{2j}$&$q^{2j}-q^{-2j}$\\
\hline
\end{tabular}
\vspace{3mm}
\caption{The Boltzmann weights at a vertex from Viro's Table 2.}
\label{viro2}
\end{table}

\subsection{MOY-type relations for $\underline{\Delta}(G, c)$.}
In this section, we discuss a series of relations that $\underline{\Delta}(G, c)$ satisfies, which are adapted versions of the MOY-type relations in \cite{alex}. These relations were inspired by Murakami-Ohtsuki-Yamada's work in \cite{MR1659228}, where they provided a graphical definition for the $\mathcal{U}_q(\mathfrak{sl}_n)$-polynomial invariants of a link for any integer $n\geq 2$.

Before stating the relations, we make the following agreements.
\begin{itemize}
\item If several diagrams are involved in one relation, they are identical outside the local diagrams drawn in the relation. 
\item If the orientations of some edges are omitted in a relation, they may be recovered in any consistent way that does not create source or sink.
\item If the multiplicities or weights are omitted in a relation on some edges, they may be recovered in any consistent way that respects the admissibility conditions. 
\item In the definition of the coloring $c$, the multiplicity of an edge is not allowed to be zero. In the following relations, if zero appears as the multiplicity of an edge which is entirely included in a local diagram, we formally define the Alexander polynomial as below, where $\{k\}_q=q^k-q^{-k}$ for $k\in \mathbb{Z}$. 

\begin{align}
\left(\begin{tikzpicture}[baseline=-0.65ex, thick, scale=0.8]
\draw (0,-1)  to (0, 1);
\draw (1,-0.33) [->-] to (0, 0.33);
\draw (1, -1) to (1,1);
\draw (0,1.25) node {$i$};
\draw (0,-1.25) node {$i$};
\draw (1,1.25) node {$j$};
\draw (0.9,-1.25) node {$j$};
\draw (0.5,0.5) node {$0$};
\end{tikzpicture}\right):=\{2j\}_q\cdot
\left(\begin{tikzpicture}[baseline=-0.65ex, thick, scale=0.8]
\draw (0,-1)  to (0, 1);
\draw (1, -1) to (1,1);
\draw (0,-1.25) node {$i$};
\draw (0.9,-1.25) node {$j$};
\end{tikzpicture}\right),\quad
\left(\begin{tikzpicture}[baseline=-0.65ex, thick, scale=0.8]
\draw (0,-1)  to (0, 1);
\draw (1,0.33) [-<-] to (0, -0.33);
\draw (1, -1) to (1,1);
\draw (0,1.25) node {$i$};
\draw (0,-1.25) node {$i$};
\draw (1,1.25) node {$j$};
\draw (0.9,-1.25) node {$j$};
\draw (0.5,0.5) node {$0$};
\end{tikzpicture}\right):=\{2i\}_q\cdot
\left(\begin{tikzpicture}[baseline=-0.65ex, thick, scale=0.8]
\draw (0,-1)  to (0, 1);
\draw (1, -1) to (1,1);
\draw (0,-1.25) node {$i$};
\draw (0.9,-1.25) node {$j$};
\end{tikzpicture}\right).
\label{eq1} \tag{0}
\end{align}

\end{itemize}

In the proof of the following theorem, it is convenient to know the following identities, which can be verified by direct calculations. 

\begin{lemma}
\label{identity}
For $a, b, c\in \mathbb{Z}$, we have
\begin{align}
\{a+b\}_q=q^{-a}\{b\}_q+q^b\{a\}_q, \label{eq1} \tag{1}\\
\{a+b\}_q\{a-b\}_q=\{a\}_q^2-\{b\}_q^2, \label{eq2} \tag{2} \\
\{a\}_q\{b+c\}_q=\{a-c\}_q\{b\}_q+\{a+b\}_q\{c\}_q. \label{eq3} \tag{3}
\end{align}
\end{lemma}

\vspace{2mm}

\begin{theo}
\label{moy}
Viro's $\mathfrak{gl}(1\vert 1)$-Alexander polynomial $\underline{\Delta}(G, c)$ satisfies the following relations, where $(D)$ represents $\underline{\Delta}(D, c)$.
\begin{align*}\label{moy}
 (i) \quad
&\left(\begin{tikzpicture}[baseline=-0.65ex, thick, scale=0.6]
\draw (0,0) circle (1);
\draw (1.5,0) node {$i$};
\end{tikzpicture}\right)
=\frac{1}{\{2i\}_q}.\\
(ii) \quad
 & \text{$(D)=0$ if $D$ is a disconnected diagram.}\\
(iii) \quad
&\left(\begin{tikzpicture}[baseline=-0.65ex, thick, scale=0.6]
\draw (0, -1) -- (0, 0);
\draw (0, 0.3)  -- (0, 1) node[above]{$(i, I)$};
\draw (0.2, 0.5) -- (0.2, 0.2) -- (-0.2, 0.1) -- (-0.2, -0.2);
\end{tikzpicture}\right)
= q^{-iI} \cdot \left(\begin{tikzpicture}[baseline=-0.65ex, thick, scale=0.5]
\draw (0, -1)  -- (0, 1) node[above]{$(i, I)$};
\end{tikzpicture}\right), \quad
\left(\begin{tikzpicture}[baseline=-0.65ex, thick, scale=0.6]
\draw (0, -1) -- (0, 0);
\draw (0, 0.3)  -- (0, 1) node[above]{$(i, I)$};
\draw (-0.2, 0.5) -- (-0.2, 0.2) -- (0.2, 0.1) -- (0.2, -0.2);
\end{tikzpicture}\right)
= q^{iI} \cdot \left(\begin{tikzpicture}[baseline=-0.65ex, thick, scale=0.5]
\draw (0, -1)  -- (0, 1) node[above]{$(i, I)$};
\end{tikzpicture}\right).\\
(iv) \quad
&\left(\begin{tikzpicture}[baseline=-0.65ex,thick,scale=0.6]
\draw [->-] (0.5,-1.5) node[below]{$i$}-- (0.5,-0.6);
\draw (0.5, -0.6) to [out=0,in=0] (0.5,0.6) ;
\draw (0.5, -0.6) to [out=180,in=180] (0.5,0.6);
\draw [->](0.5,0.6) -- (0.5,1.5) node[above]{$i$};
\end{tikzpicture}\right) = \{2i\}_q\cdot
\left(\begin{tikzpicture}[baseline=-0.65ex,thick,scale=0.6]
\draw [->](0,-1.5) -- (0, 1.5);
\draw (0.5 , 0) node {$i$};
\draw (0.25, 0) node {};
\end{tikzpicture}\right).\\
(v)
\quad
& \left(\begin{tikzpicture}[baseline=-0.65ex, thick, scale=1.2]
\draw (0,-1) [->-] to  (0.5,-0.5);
\draw (0.5, -0.5) [->-] to  (1,-0.5) ;
\draw (1, -0.5) [->] to  (1.5,-1);
\draw (0.5,-0.5) [-<-] to (0.5,0.5);
\draw (1,-0.5) [->-] to  (1,0.5);
\draw (0,1) [<-] to (0.5,0.5);
\draw (0.5, 0.5) [-<-] to  (1,0.5);
\draw (1, 0.5) [-<-]  to  (1.5,1);
\draw (0, -1.25) node {$j$};
\draw (1.5, -1.25) node {$i$};
\draw (0.2, 0) node {$i$};
\draw (0, 1.25) node {$j$};
\draw (1.5, 1.25) node {$i$};
\draw (1.3, 0) node {$j$};
\draw (0.75,-0.75) node {$i+j$};
\draw (0.75,0.75) node {$i+j$};
\end{tikzpicture}\right)
=\{2i+2j\}_q\cdot
\left(\begin{tikzpicture}[baseline=-0.65ex, thick, scale=1.2]
\draw (0,-1) [->-] to  (0.5,-0.5);
\draw (0.5, -0.5)  [->]  to  (1,-1);
\draw (0.5, 0.5)    [->] to (0,1);
\draw (0.5,0.5) [-<-] to  (1,1);
\draw (0.5,-0.5) [-<-] to  (0.5,0.5);
\draw (0, -1.25) node {$j$};
\draw (1, -1.25) node {$i$};
\draw (1.2, 0) node {$i-j$};
\draw (0, 1.25) node {$j$};
\draw (1, 1.25) node {$i$};
\end{tikzpicture}\right)
+\{2j\}_q^2\cdot
\left(\begin{tikzpicture}[baseline=-0.65ex, thick, scale=1.2]
\draw (0,-1) [->] to (0,1);
\draw (1,-1) [<-] to (1,1);
\draw (-0,-1.25) node {$j$};
\draw (1, -1.25) node {$i$};
\end{tikzpicture}\right). \\
(vi)
\quad
& \left(\begin{tikzpicture}[baseline=-0.65ex,thick, scale=0.9]
\draw (0,-1.5) [->-] to (0,-0.5);
\draw (0,-0.5) [->-] to  (-1,0.5);
\draw (-1,0.5) [->] to  (-2,1.5);
\draw (-1,0.5) [->] to  (-0.5,1.5);
\draw (0,-0.5) [->] to  (1,1.5);
\draw (-1.2,-1) node {$i+j+k$};
\draw (1, 2) node {$k$};
\draw (-1.4,-0.1) node {$i+j$};
\draw (-2, 2) node {$i$};
\draw (-0.5,2) node {$j$};
\end{tikzpicture}\right)=\frac{\{2i+2j\}_q}{\{2j+2k\}_q}\cdot
\left(\begin{tikzpicture}[baseline=-0.65ex,thick, scale=0.9]
\draw (0,-1.5) [->-]-- (0,-0.5);
\draw (0,-0.5) [->] to  (-1,1.5);
\draw (1,0.5) [->] to  (2,1.5);
\draw (0,-0.5) [->-] to  (1,0.5);
\draw (1, 0.5) [->] to  (0.5,1.5);
\draw (1.3,-1) node {$i+j+k$};
\draw (2, 2) node {$k$};
\draw (1.5,0) node {$j+k$};
\draw (-1, 2) node {$i$};
\draw (0.5,2) node {$j$};
\end{tikzpicture}\right), \\
\quad
& \left(\begin{tikzpicture}[baseline=-0.65ex,thick, scale=0.9]
\draw (1,0.5) [->]-- (1,1.5);
\draw (2,-1.5) [->-] to  (1,0.5);
\draw (0,-0.5) [->-] to (1,0.5);
\draw (-1,-1.5) [->-] to  (0,-0.5);
\draw (0.5, -1.5) [->-] to  (0,-0.5);
\draw (-0.3,1) node {$i+j+k$};
\draw (2, -2) node {$k$};
\draw (-0.5,0.1) node {$i+j$};
\draw (-1, -2) node {$i$};
\draw (0.5,-2) node {$j$};
\end{tikzpicture}\right)=
\left(\begin{tikzpicture}[baseline=-0.65ex,thick, scale=0.9]
\draw (0,0.5) [->]-- (0,1.5);
\draw (-1,-1.5) [->-]  to  (0,0.5);
\draw (1,-0.5) [->-] to (0,0.5);
\draw (0.5,-1.5) [->-] to  (1,-0.5);
\draw (2, -1.5) [->-] to  (1,-0.5);
\draw (1.3,1) node {$i+j+k$};
\draw (2, -2) node {$k$};
\draw (1.4,0.2) node {$j+k$};
\draw (-1, -2) node {$i$};
\draw (0.5,-2) node {$j$};
\end{tikzpicture}\right).\\
(vii)
\quad
& \left(\begin{tikzpicture}[baseline=-0.65ex, thick, scale=1.2]
\draw (0,-1) [->-] to (0, -0.33);
\draw (0, -0.33) [->-] to (0, 0.33);
\draw (0, 0.33) [->] to (0,1);
\draw (2,-1) [->-] to (2, -0.66);
\draw (2, -0.66) [->-] to (2, 0.66);
\draw (2, 0.66) [->] to (2,1);
\draw (0,-0.33) [-<-] to  (2,-0.66);
\draw (0,0.33) [->-] to (2,0.66);
\draw (0,-1.25) node {$i$};
\draw (2,-1.25) node {$j$};
\draw (0.5,0) node {$i+k$};
\draw (0.1,1.25) node {$i+k-l$};
\draw (1, -0.8) node {$k$};
\draw (1.5,0) node {$j-k$};
\draw (1, 0.8) node {$l$};
\draw (2,1.25) node {$j+l-k$};
\end{tikzpicture}\right)
= \frac{ \{2j\}_q\{2l\}_q}{ \{2j+2l-2k\}_q}\cdot
\left(
\begin{tikzpicture}[baseline=-0.65ex, thick, scale=1.2]
\draw (0,-1) [->-] to  (0.5,-0.33);
\draw (0.5, -0.33) [->-] to  (0.5,0.33);
\draw (0.5, 0.33) [->] to  (0,1);
\draw (1,-1) [->-] to (0.5,-0.33);
\draw (1,1) [<-] to  (0.5,0.33);
\draw (0, -1.25) node {$i$};
\draw (0.9,-1.25) node {$j$};
\draw (-0.5,1.25) node {$i+k-l$};
\draw (1.5,1.25) node {$j+l-k$};
\draw (1.2,0) node {$i+j$};
\end{tikzpicture}\right)\\
\quad & \hspace{4cm} + \frac{\{2i+2k-2l\}_q\{2j-2k\}_q}{\{2j+2l-2k\}_q}\cdot
\left(\begin{tikzpicture}[baseline=-0.65ex, thick, scale=1.2]
\draw (0,-1) [->-] to (0, 0.33);
\draw (0, 0.33) [->] to (0,1);
\draw (1,-1) [->-] to (1, -0.33);
\draw (1, -0.33) [->] to (1,1);
\draw (0,0.33) [-<-]  to  (1,-0.33);
\draw (-0.5,1.25) node {$i+k-l$};
\draw (0,-1.25) node {$i$};
\draw (1.5,1.25) node {$j+l-k$};
\draw (1,-1.25) node {$j$};
\draw (0.5,0.5) node {$k-l$};
\end{tikzpicture}\right).\\
(viii) 
\quad
&\left(\begin{tikzpicture}[baseline=-0.65ex, thick, scale=0.55]
\draw (1,-1)   -- (0.2,-0.2);
\draw (-1, -1) [->] -- (1, 1) node[above]{$(i, I)$};
\draw (-0.2,0.2) [->] to (-1,1)  node[above]{$(j, J)$};
\end{tikzpicture}\right)
= \frac{q^{i-j-iJ-jI}}{\{2i\}_q}\cdot
\left(
\begin{tikzpicture}[baseline=-0.65ex, thick, scale=1.1]
\draw (0,-1) [->-] to  (0.5,-0.33);
\draw (0.5, -0.33) [->-] to  (0.5,0.33);
\draw (0.5, 0.33) [->] to  (0,1);
\draw (1,-1) [->-] to  (0.5,-0.33);
\draw (1,1) [<-] to  (0.5,0.33);
\draw (0, -1.25) node {$(i, I)$};
\draw (0.9,-1.25) node {$(j, J)$};
\draw (0,1.25) node {$(j, J)$};
\draw (1,1.25) node {$(i, I)$};
\draw (1,0) node {$i+j$};
\end{tikzpicture}\right) 
+  \frac{-q^{-i-j-iJ-jI}}{\{2i\}_q}\cdot
\left(\begin{tikzpicture}[baseline=-0.65ex, thick, scale=1.1]
\draw (0,-1) [->-] to (0, 0.33);
\draw (0, 0.33) [->] to (0,1);
\draw (1,-1) [->-] to (1, -0.33);
\draw (1, -0.33) [->] to (1,1);
\draw (0,0.33) [-<-]  to  (1,-0.33);
\draw (0,1.25) node {$(j, J)$};
\draw (0,-1.25) node {$(i, I)$};
\draw (1,1.25) node {$(i, I)$};
\draw (0.9,-1.25) node {$(j, J)$};
\draw (0.5,0.5) node {$j-i$};
\end{tikzpicture}\right)\\
 & \hspace{3cm}
 = \frac{q^{j-i-iJ-jI}}{\{2j\}_q}\cdot
\left(
\begin{tikzpicture}[baseline=-0.65ex, thick, scale=1.1]
\draw (0,-1) [->-] to  (0.5,-0.33);
\draw (0.5, -0.33) [->-] to  (0.5,0.33);
\draw (0.5, 0.33) [->] to  (0,1);
\draw (1,-1) [->-] to  (0.5,-0.33);
\draw (1,1) [<-] to  (0.5,0.33);
\draw (0, -1.25) node {$(i, I)$};
\draw (0.9,-1.25) node {$(j, J)$};
\draw (0,1.25) node {$(j, J)$};
\draw (1,1.25) node {$(i, I)$};
\draw (1,0) node {$i+j$};
\end{tikzpicture}\right)
+\frac{-q^{-i-j-iJ-jI}}{\{2j\}_q}\cdot
\left(\begin{tikzpicture}[baseline=-0.65ex, thick, scale=1.1]
\draw (0,-1) [->-] to (0, -0.33);
\draw (0, -0.33) [->] to (0,1);
\draw (1,-1) [->-] to (1, 0.33);
\draw (1, 0.33) [->] to (1,1);
\draw (0,-0.33) [->-]  to  (1,0.33);
\draw (0,1.25) node {$(j, J)$};
\draw (0,-1.25) node {$(i, I)$};
\draw (1,1.25) node {$(i, I)$};
\draw (0.9,-1.25) node {$(j, J)$};
\draw (0.5,0.5) node {$i-j$};
\end{tikzpicture}\right),\\
&\left(\begin{tikzpicture}[baseline=-0.65ex, thick, scale=0.55]
\draw (1,-1) [->]  -- (-1,1)  node[above]{$(j, J)$};
\draw (-1, -1) to (-0.2, -0.2);
\draw (0.2, 0.2) [->] -- (1, 1) node[above]{$(i, I)$};
\end{tikzpicture}\right)
= \frac{q^{j-i+iJ+jI}}{\{2i\}_q}\cdot
\left(
\begin{tikzpicture}[baseline=-0.65ex, thick, scale=1.1]
\draw (0,-1) [->-] to  (0.5,-0.33);
\draw (0.5, -0.33) [->-] to  (0.5,0.33);
\draw (0.5, 0.33) [->] to  (0,1);
\draw (1,-1) [->-] to  (0.5,-0.33);
\draw (1,1) [<-] to  (0.5,0.33);
\draw (0, -1.25) node {$(i, I)$};
\draw (0.9,-1.25) node {$(j, J)$};
\draw (0,1.25) node {$(j, J)$};
\draw (1,1.25) node {$(i, I)$};
\draw (1,0) node {$i+j$};
\end{tikzpicture}\right) 
+  \frac{-q^{i+j+iJ+jI}}{\{2i\}_q}\cdot
\left(\begin{tikzpicture}[baseline=-0.65ex, thick, scale=1.1]
\draw (0,-1) [->-] to (0, 0.33);
\draw (0, 0.33) [->] to (0,1);
\draw (1,-1) [->-] to (1, -0.33);
\draw (1, -0.33) [->] to (1,1);
\draw (0,0.33) [-<-]  to  (1,-0.33);
\draw (0,1.25) node {$(j, J)$};
\draw (0,-1.25) node {$(i, I)$};
\draw (1,1.25) node {$(i, I)$};
\draw (0.9,-1.25) node {$(j, J)$};
\draw (0.5,0.5) node {$j-i$};
\end{tikzpicture}\right)\\
 & \hspace{3cm}
 = \frac{q^{i-j+iJ+jI}}{\{2j\}_q}\cdot
\left(
\begin{tikzpicture}[baseline=-0.65ex, thick, scale=1.1]
\draw (0,-1) [->-] to  (0.5,-0.33);
\draw (0.5, -0.33) [->-] to  (0.5,0.33);
\draw (0.5, 0.33) [->] to  (0,1);
\draw (1,-1) [->-] to  (0.5,-0.33);
\draw (1,1) [<-] to  (0.5,0.33);
\draw (0, -1.25) node {$(i, I)$};
\draw (0.9,-1.25) node {$(j, J)$};
\draw (0,1.25) node {$(j, J)$};
\draw (1,1.25) node {$(i, I)$};
\draw (1,0) node {$i+j$};
\end{tikzpicture}\right)
+\frac{-q^{i+j+iJ+jI}}{\{2j\}_q}\cdot
\left(\begin{tikzpicture}[baseline=-0.65ex, thick, scale=1.1]
\draw (0,-1) [->-] to (0, -0.33);
\draw (0, -0.33) [->] to (0,1);
\draw (1,-1) [->-] to (1, 0.33);
\draw (1, 0.33) [->] to (1,1);
\draw (0,-0.33) [->-]  to  (1,0.33);
\draw (0,1.25) node {$(j, J)$};
\draw (0,-1.25) node {$(i, I)$};
\draw (1,1.25) node {$(i, I)$};
\draw (0.9,-1.25) node {$(j, J)$};
\draw (0.5,0.5) node {$i-j$};
\end{tikzpicture}\right).
\end{align*}
\end{theo}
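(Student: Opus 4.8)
The plan is to deduce every relation from the vertex state sum of Proposition \ref{sum}, turning each global identity into a finite local identity among the Boltzmann weights of Tables \ref{viro1} and \ref{viro2}. Fix a graph diagram in Morse position together with its initial point $\delta$; for the local relations (iii)--(viii) I would choose $\delta$ on an edge lying outside the local region $R$ displayed in the relation. By Proposition \ref{sum}, $\underline{\Delta}(G,c)$ equals $\frac{q^{(-1)^{\theta}2j}}{\{2j\}_q}$ times the sum over states $s$ of the product $\prod_{f} w_f(s)$ of Boltzmann weights over all features $f$ (critical points, crossings, half-twists, vertices). Because the diagrams appearing in a single relation coincide outside $R$, this product factors as $W_{\mathrm{out}}(s)\,W_{R}(s)$, where both $W_{\mathrm{out}}$ and the normalization prefactor depend only on the part of the diagram outside $R$, and each state restricts to a boundary condition $b$ (an assignment of $e_0$ or $e_1$) on the edges meeting $\partial R$. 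Summing first over the internal states compatible with $b$ defines a local weight $Z_R(b)=\sum_{s|_R\mapsto b}W_R(s|_R)$. Since the external data is identical across all diagrams in a given relation, the relation $(D)=\sum_k \alpha_k (D_k)$ reduces to the purely local identity $Z_R^{D}(b)=\sum_k \alpha_k Z_R^{D_k}(b)$ for every boundary condition $b$, which is a finite check.

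The two global relations (i) and (ii) I would treat first and separately. For (i) the circle is a single closed edge carrying $\delta$, so there is exactly one state; multiplying the two critical-point weights of Table \ref{viro1} by the normalization $\frac{q^{(-1)^{\theta}2i}}{\{2i\}_q}$ collapses to $\frac{1}{\{2i\}_q}$. For (ii), a connected component $C$ disjoint from the edge carrying $\delta$ may be summed over its own states first; its contribution is the quantum super-trace of the identity on the corresponding $(1\vert 1)$-module. Concretely, along any closed loop in $C$ the boson state $e_0$ and fermion state $e_1$ contribute cup--cap products of equal $q$-weight and opposite sign, so they cancel and the entire state sum vanishes. I would phrase this as: the state sum factors through a closed evaluation of $C$ that equals $0$, hence $(D)=0$.

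Relation (iii) is immediate from the half-twist entries of Table \ref{viro1}: a positive (resp. negative) half-twist on an edge colored $(i,I)$ multiplies the weight of a state by $q^{-iI}$ (resp. $q^{iI}$) regardless of whether that edge is in state $e_0$ or $e_1$, so the scalar factors out of the sum and $Z_R^{D}(b)=q^{\mp iI}Z_R^{\mathrm{strand}}(b)$. Relation (iv) is the removal of a closed loop fused into a strand: here $\partial R$ is met only by the two ends of the strand, and summing the relevant cup and cap weights of Table \ref{viro1} over the two internal states of the loop produces exactly the factor $\{2i\}_q$ times the weight of the bare strand. I would also record here how the multiplicity-zero convention fixed in the agreement before the theorem enters, since an omitted arc in such a loop may legitimately carry multiplicity $0$.

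The substance of the theorem lies in the vertex and crossing relations (v)--(viii). For each I would list the boundary conditions $b$ on the two, three, or four edges crossing $\partial R$, and for each $b$ sum over the finitely many internal states the product of the pertinent vertex weights (Table \ref{viro2}) and crossing or half-twist weights (Table \ref{viro1}). The identities then follow from elementary manipulations with $\{k\}_q=q^k-q^{-k}$: the prefactors $\frac{\{2i+2j\}_q}{\{2j+2k\}_q}$ of (vi), the coefficients $\frac{\{2j\}_q\{2l\}_q}{\{2j+2l-2k\}_q}$ and $\frac{\{2i+2k-2l\}_q\{2j-2k\}_q}{\{2j+2l-2k\}_q}$ of (vii), and the $q$-power coefficients $q^{\pm i\mp j\mp iJ\mp jI}$ of (viii) all emerge from collecting these sums. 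I expect (vii) and (viii) to be the main obstacle. They carry the largest number of boundary conditions and internal states, and several of their internal edges are labeled by differences such as $i-j$, $j-i$, or $k-l$ whose multiplicity may vanish; in those degenerate cases the corresponding terms must be reinterpreted through the $\{2i\}_q$- and $\{2j\}_q$-rescaled strands of the agreement, and I would verify that the local identities survive this reinterpretation. A further consistency point special to (viii) is that each crossing admits two stated resolutions, so I must also check that the two right-hand expressions given there agree, which amounts to one more $q$-algebra identity among the coefficients.
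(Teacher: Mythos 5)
Your plan is essentially the paper's own proof: the paper likewise reduces each of (v)--(viii) to an identity among the local morphisms determined by the Boltzmann weights --- your local weights $Z_R(b)$, indexed by boundary conditions $b$, are exactly the matrix entries it tabulates from Tables \ref{viro1} and \ref{viro2} before verifying identities such as $A=\{2i+2j\}_q B+\{2j\}_q^2 I$ --- while relations (i)--(iv) are simply quoted from Viro's paper rather than re-derived as you sketch. The one caveat worth noting is that your evaluation of the digon in (iv) should draw on the trivalent-vertex weights of Table \ref{viro2} rather than the cup/cap entries of Table \ref{viro1}, since that bigon is bounded by two trivalent vertices; otherwise the outline matches the paper's argument.
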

\begin{proof}
Relations (i), (ii), (iii) and (iv) were proved in \cite[Sections 5; see also Section 9]{MR2255851}. The other relations can be proved by a straightforward application of the Boltzmann weights in Tables \ref{viro1} and \ref{viro2}. We prove (v), (vii) and the first relation of (viii). It is enough to show that the morphisms between $\mathcal{U}_q(\mathfrak{gl}(1\vert 1))$-modules defined by the local diagrams satisfy the same relations. Proofs for the remaining relations are left to the reader.

\vspace{2mm}

(v) The left-hand diagram has the following local states on which the Boltzmann weights do not vanish.
\begin{align*}
\begin{tikzpicture}[baseline=-0.65ex, thick, scale=1.2]
\draw [dotted](0,-1) [->-] to (0, -0.43);
\draw [dotted](0,0.43) [->-] to (0, -0.43);
\draw [dotted](0, 0.43) [->] to (0,1);
\draw [dotted](1,-1) [<-] to (1, -0.1);
\draw [dotted](1, -0.1) [->-] to (1,0.43);
\draw  [dotted] (1,0.43) [-<-] to (1,1);
\draw [dotted](1, 0.33) [->-] to (0,0.63);
\draw [dotted](0,-0.43) [->-] to (1,-0.1);
\end{tikzpicture} \quad
\begin{tikzpicture}[baseline=-0.65ex, thick, scale=1.2]
\draw [dotted](0,-1) [->-] to (0, -0.43);
\draw (0,0.63) [->-] to (0, -0.43);
\draw [dotted] (0, 0.43) [->] to (0,1);
\draw [dotted](1,-1) [<-] to (1, -0.1);
\draw (1, -0.1) [->-] to (1,0.33);
\draw  [dotted] (1,0.33) [-<-] to (1,1);
\draw (1, 0.33) [->-] to (0,0.63);
\draw (0,-0.43) [->-] to (1,-0.1);
\end{tikzpicture} \quad
\begin{tikzpicture}[baseline=-0.65ex, thick, scale=1.2]
\draw [dotted] (0,-1) [->-] to (0, -0.43);
\draw [dotted] (0,0.63) [->-] to (0, -0.43);
\draw  (0, 0.63) [->] to (0,1);
\draw [dotted] (1,-1) [<-] to (1, -0.1);
\draw [dotted] (1, -0.1) [->-] to (1,0.33);
\draw  (1,0.33) [-<-] to (1,1);
\draw (1, 0.33) [->-] to (0,0.63);
\draw [dotted] (0,-0.43) [->-] to (1,-0.1);
\end{tikzpicture} \quad
\begin{tikzpicture}[baseline=-0.65ex, thick, scale=1.2]
\draw (0,-1) [->-] to (0, -0.43);
\draw [dotted] (0,0.63) [->-] to (0, -0.43);
\draw [dotted] (0, 0.43) [->] to (0,1);
\draw (1,-1) [<-] to (1, -0.1);
\draw [dotted] (1, -0.1) [->-] to (1,0.33);
\draw [dotted] (1,0.33) [-<-] to (1,1);
\draw [dotted] (1, 0.33) [->-] to (0,0.63);
\draw  (0,-0.43) [->-] to (1,-0.1);
\end{tikzpicture} \quad
\begin{tikzpicture}[baseline=-0.65ex, thick, scale=1.2]
\draw (0,-1) [->-] to (0, -0.43);
\draw [dotted] (0,0.63) [->-] to (0, -0.43);
\draw  (0, 0.63) [->] to (0,1);
\draw (1,-1) [<-] to (1, -0.1);
\draw [dotted] (1, -0.1) [->-] to (1,0.33);
\draw  (1,0.33) [-<-] to (1,1);
\draw  (1, 0.33) [->-] to (0,0.63);
\draw  (0,-0.43) [->-] to (1,-0.1);
\end{tikzpicture} \quad
\begin{tikzpicture}[baseline=-0.65ex, thick, scale=1.2]
\draw (0,-1) [->-] to (0, -0.43);
\draw [dotted] (0,0.63) [->-] to (0, -0.43);
\draw  (0, 0.63) [->] to (0,1);
\draw [dotted] (1,-1) [<-] to (1, -0.1);
\draw (1, -0.1) [->-] to (1,0.33);
\draw  [dotted] (1,0.33) [-<-] to (1,1);
\draw (1, 0.33) [->-] to (0,0.63);
\draw (0,-0.43) [->-] to (1,-0.1);
\end{tikzpicture} \quad
\begin{tikzpicture}[baseline=-0.65ex, thick, scale=1.2]
\draw [dotted](0,-1) [->-] to (0, -0.43);
\draw (0,0.63) [->-] to (0, -0.43);
\draw [dotted] (0, 0.43) [->] to (0,1);
\draw (1,-1) [<-] to (1, -0.1);
\draw [dotted] (1, -0.1) [->-] to (1,0.33);
\draw  (1,0.33) [-<-] to (1,1);
\draw (1, 0.33) [->-] to (0,0.63);
\draw (0,-0.43) [->-] to (1,-0.1);
\end{tikzpicture}
\end{align*}
Therefore the morphism defined by the left-hand diagram is as below. For simplicity we omit the subindex $q$ from the expression $\{k\}_q$.
\begin{eqnarray*}
e_0 \otimes e_0 &\mapsto & q^{-2i}e_0\otimes e_0\otimes e_0 + e_1 \otimes e_1\otimes e_0\\
&\mapsto & q^{-2i}(\{2i+2j\}q^{2i}) e_0 \otimes e_0 + \{2j\}e_1 \otimes e_1\\
& \mapsto & q^{-2i}(\{2i+2j\}q^{2i})[ e_0 \otimes e_0\otimes e_0 + (-q^{2(i+j)}) e_0 \otimes e_1\otimes e_1]
+ \{2j\} e_1 \otimes e_1\otimes e_0 \\
& \mapsto & \{2i+2j\}[\{2i+2j\} e_0 \otimes e_0 -q^{2(i+j)}\{2j\} e_1 \otimes e_1] -\{2j\}\{2i\}q^{-2(i+j)} e_0\otimes e_0\\
&=& (\{2i+2j\}^2-\{2i\}\{2j\}q^{-2(i+j)}) e_0\otimes e_0 -\{2i+2j\}\{2j\} q^{2(i+j)} e_1\otimes e_1;\\
e_0 \otimes e_1 & \mapsto & e_1 \otimes e_1\otimes e_1\\
& \mapsto & \{2i\} e_1 \otimes e_0\\
& \mapsto & \{2i\}(-q^{2(i+j)})e_1 \otimes e_1\otimes e_1\\
& \mapsto & \{2i\}(-q^{2(i+j)})(-\{2i\}q^{-2(i+j)})e_0\otimes e_1=\{2i\}^2 e_0 \otimes e_1;\\
e_1\otimes e_0 & \mapsto & e_0 \otimes e_1\otimes e_0\\
& \mapsto & \{2j\} e_0\otimes e_1\\
& \mapsto & \{2j\} e_0\otimes e_1\otimes e_0\\
& \mapsto &  \{2j\}^2  e_1\otimes e_0;\\
e_1\otimes e_1 & \mapsto & e_0\otimes e_1 \otimes e_1\\
& \mapsto & \{2i\} e_0 \otimes e_0\\
& \mapsto & \{2i\}[e_0 \otimes e_0 \otimes e_0 + (-q^{2(i+j)})e_0\otimes e_1 \otimes e_1]\\
& \mapsto & \{2i\} [\{2i+2j\} e_0 \otimes e_0 + (-q^{2(i+j)}) \{2j\}e_1 \otimes e_1]\\
& = & \{2i\} \{2i+2j\} e_0 \otimes e_0 -\{2i\}\{2j\}q^{2(i+j)} e_1 \otimes e_1.
\end{eqnarray*}
The matrix for the morphism under the basis $\{e_0 \otimes e_0, e_0 \otimes e_1, e_1 \otimes e_0, e_1 \otimes e_1\}$ is
$$
A= \left(
    \begin{array}{cccc}
      \{2i+2j\}^2-\{2i\}\{2j\}q^{-2(i+j)} & 0 & 0 & \{2i\} \{2i+2j\}\\
      0 & \{2i\}^2 & 0 & 0 \\
      0 & 0 & \{2j\}^2 & 0 \\
      -\{2i+2j\}\{2j\} q^{2(i+j)} & 0 & 0 & -\{2i\}\{2j\}q^{2(i+j)}
    \end{array}
  \right).
$$

The two local diagrams on the right-hand side have the following local states on which the Boltzmann weights do not vanish:
\begin{align*}
\begin{tikzpicture}[baseline=-0.65ex, thick, scale=1.2]
\draw [dotted] (0,-1) [->-] to  (0.5,-0.33);
\draw [dotted] (0.5, -0.33) [-<-] to  (0.5,0.33);
\draw [dotted] (0.5, 0.33) [->] to  (0,1);
\draw [dotted] (1,-1) [<-] to  (0.5,-0.33);
\draw [dotted] (1,1) [->-] to  (0.5,0.33);
\end{tikzpicture} \quad
\begin{tikzpicture}[baseline=-0.65ex, thick, scale=1.2]
\draw [dotted] (0,-1) [->-] to  (0.5,-0.33);
\draw [dotted] (0.5, -0.33) [-<-] to  (0.5,0.33);
\draw (0.5, 0.33) [->] to  (0,1);
\draw [dotted] (1,-1) [<-] to  (0.5,-0.33);
\draw (1,1) [->-] to  (0.5,0.33);
\end{tikzpicture} \quad 
\begin{tikzpicture}[baseline=-0.65ex, thick, scale=1.2]
\draw  (0,-1) [->-] to  (0.5,-0.33);
\draw [dotted] (0.5, -0.33) [-<-] to  (0.5,0.33);
\draw [dotted] (0.5, 0.33) [->] to  (0,1);
\draw  (1,-1) [<-] to  (0.5,-0.33);
\draw [dotted] (1,1) [->-] to  (0.5,0.33);
\end{tikzpicture} \quad
\begin{tikzpicture}[baseline=-0.65ex, thick, scale=1.2]
\draw  (0,-1) [->-] to  (0.5,-0.33);
\draw [dotted] (0.5, -0.33) [-<-] to  (0.5,0.33);
\draw (0.5, 0.33) [->] to  (0,1);
\draw  (1,-1) [<-] to  (0.5,-0.33);
\draw (1,1) [->-] to  (0.5,0.33);
\end{tikzpicture} \quad
\begin{tikzpicture}[baseline=-0.65ex, thick, scale=1.2]
\draw [dotted] (0,-1) [->-] to  (0.5,-0.33);
\draw  (0.5, -0.33) [-<-] to  (0.5,0.33);
\draw [dotted] (0.5, 0.33) [->] to  (0,1);
\draw   (1,-1) [<-] to  (0.5,-0.33);
\draw  (1,1) [->-] to  (0.5,0.33);
\end{tikzpicture},
\end{align*} and
\begin{align*} 
\begin{tikzpicture}[baseline=-0.65ex, thick, scale=1.2]
\draw (0,-1) [->] to (0,0.5);
\draw (0.5,-1) [<-] to (0.5,0.5);
\end{tikzpicture} \quad \quad
\begin{tikzpicture}[baseline=-0.65ex, thick, scale=1.2]
\draw [dotted] (0,-1) [->] to (0,0.5);
\draw [dotted] (0.5,-1) [<-] to (0.5,0.5);
\end{tikzpicture}  \quad \quad
\begin{tikzpicture}[baseline=-0.65ex, thick, scale=1.2]
\draw  (0,-1) [->] to (0,0.5);
\draw [dotted] (0.5,-1) [<-] to (0.5,0.5);
\end{tikzpicture}  \quad \quad
\begin{tikzpicture}[baseline=-0.65ex, thick, scale=1.2]
\draw [dotted] (0,-1) [->] to (0,0.5);
\draw  (0.5,-1) [<-] to (0.5,0.5);
\end{tikzpicture}.
\end{align*} 
The matrices for the morphisms are 
$$
  B = \left(
    \begin{array}{cccc}
      \{2i\}q^{2j} & 0 & 0 & \{2i\}\\
      0 & \{2i-2j\} & 0 & 0 \\
      0 & 0 & 0 & 0 \\
     -\{2j\}q^{2(i+j)} & 0 & 0 & -\{2j\}q^{2i}
    \end{array}
  \right)
$$ and the identity matrix $I$ respectively. We can check the equality $A=\{2i+2j\} \cdot B +\{2j\}^2 \cdot I$ by applying the identities in Lemma \ref{identity}. Consider the $(1, 1)$-entry, we have
\begin{eqnarray*}
&&\{2i+2j\}^2-\{2i\}\{2j\}q^{-2(i+j)}\\
&\overset{\eqref{eq1}}{=}&\{2i+2j\}^2-(\{2i+2j\}-q^{2i}\{2j\})(\{2i+2j\}-q^{2j}\{2i\})\\
&=& \{2i+2j\}\{2i\}q^{2j}+\{2i+2j\}\{2j\}q^{2i}-\{2i\}\{2j\}q^{2i+2j}\\
&=& \{2i+2j\}\{2i\}q^{2j}+\{2j\}q^{2i}(\{2i+2j\}-\{2i\}q^{2j})\\
&\overset{\eqref{eq1}}{=}& \{2i+2j\}\{2i\}q^{2j}+\{2j\}q^{2i}\{2j\}q^{-2i}=\{2i+2j\}\{2i\}q^{2j}+\{2j\}^2.
\end{eqnarray*}
The equality on the $(2, 2)$-entry needs 
$$\{2i\}^2=\{2i+2j\}\{2i-2j\}+\{2j\}^2$$
which follows from \eqref{eq2} of Lemma \ref{identity}. For the $(4, 4)$-entry we have
\begin{eqnarray*}
-\{2i\}\{2j\}q^{2(i+j)}&\overset{\eqref{eq1}}{=}&-\{2j\}q^{2i}(\{2i+2j\}-\{2j\}q^{-2i})\\
&=& -\{2i+2j\}\{2j\}q^{2i}+\{2j\}^2.
\end{eqnarray*}
The equalities on the other entries can be verified by direct comparison.

\vspace{2mm}

(vii) The left-hand local diagram has the following local states with non-vanishing Boltzmann weights.
\begin{align*}
\begin{tikzpicture}[baseline=-0.65ex, thick, scale=1.2]
\draw [dotted](0,-1) [->-] to (0, -0.43);
\draw [dotted] (0,0.63) [-<-] to (0, -0.43);
\draw [dotted] (0, 0.63) [->] to (0,1);
\draw [dotted] (-1,-1) [->-] to (-1, -0.1);
\draw [dotted] (-1, -0.1) [->-] to (-1,0.33);
\draw [dotted] (-1,0.33) [->] to (-1,1);
\draw [dotted] (-1, 0.33) [->-] to (0,0.63);
\draw [dotted] (0,-0.43) [->-] to (-1,-0.1);
\end{tikzpicture}\quad
\begin{tikzpicture}[baseline=-0.65ex, thick, scale=1.2]
\draw (0,-1) [->-] to (0, -0.43);
\draw  (0,0.63) [-<-] to (0, -0.43);
\draw  (0, 0.63) [->] to (0,1);
\draw [dotted] (-1,-1) [->-] to (-1, -0.1);
\draw [dotted] (-1, -0.1) [->-] to (-1,0.33);
\draw [dotted] (-1,0.33) [->] to (-1,1);
\draw [dotted] (-1, 0.33) [->-] to (0,0.63);
\draw [dotted] (0,-0.43) [->-] to (-1,-0.1);
\end{tikzpicture}\quad
\begin{tikzpicture}[baseline=-0.65ex, thick, scale=1.2]
\draw (0,-1) [->-] to (0, -0.43);
\draw [dotted] (0,0.63) [-<-] to (0, -0.43);
\draw  (0, 0.63) [->] to (0,1);
\draw [dotted] (-1,-1) [->-] to (-1, -0.1);
\draw  (-1, -0.1) [->-] to (-1,0.33);
\draw [dotted] (-1,0.33) [->] to (-1,1);
\draw  (-1, 0.33) [->-] to (0,0.63);
\draw  (0,-0.43) [->-] to (-1,-0.1);
\end{tikzpicture}\quad
\begin{tikzpicture}[baseline=-0.65ex, thick, scale=1.2]
\draw (0,-1) [->-] to (0, -0.43);
\draw [dotted] (0,0.63) [-<-] to (0, -0.43);
\draw [dotted] (0, 0.63) [->] to (0,1);
\draw [dotted] (-1,-1) [->-] to (-1, -0.1);
\draw  (-1, -0.1) [->-] to (-1,0.33);
\draw  (-1,0.33) [->] to (-1,1);
\draw  [dotted] (-1, 0.33) [->-] to (0,0.63);
\draw  (0,-0.43) [->-] to (-1,-0.1);
\end{tikzpicture}\quad
\begin{tikzpicture}[baseline=-0.65ex, thick, scale=1.2]
\draw [dotted] (0,-1) [->-] to (0, -0.43);
\draw [dotted] (0,0.63) [-<-] to (0, -0.43);
\draw [dotted] (0, 0.63) [->] to (0,1);
\draw  (-1,-1) [->-] to (-1, -0.1);
\draw  (-1, -0.1) [->-] to (-1,0.33);
\draw  (-1,0.33) [->] to (-1,1);
\draw  [dotted] (-1, 0.33) [->-] to (0,0.63);
\draw [dotted] (0,-0.43) [->-] to (-1,-0.1);
\end{tikzpicture}\quad
\begin{tikzpicture}[baseline=-0.65ex, thick, scale=1.2]
\draw [dotted] (0,-1) [->-] to (0, -0.43);
\draw [dotted] (0,0.63) [-<-] to (0, -0.43);
\draw  (0, 0.63) [->] to (0,1);
\draw  (-1,-1) [->-] to (-1, -0.1);
\draw  (-1, -0.1) [->-] to (-1,0.33);
\draw [dotted] (-1,0.33) [->] to (-1,1);
\draw   (-1, 0.33) [->-] to (0,0.63);
\draw [dotted] (0,-0.43) [->-] to (-1,-0.1);
\end{tikzpicture}\quad
\begin{tikzpicture}[baseline=-0.65ex, thick, scale=1.2]
\draw (0,-1) [->-] to (0, -0.43);
\draw  (0,0.63) [-<-] to (0, -0.43);
\draw  (0, 0.63) [->] to (0,1);
\draw  (-1,-1) [->-] to (-1, -0.1);
\draw  (-1, -0.1) [->-] to (-1,0.33);
\draw  (-1,0.33) [->] to (-1,1);
\draw  [dotted] (-1, 0.33) [->-] to (0,0.63);
\draw [dotted] (0,-0.43) [->-] to (-1,-0.1);
\end{tikzpicture}
\end{align*}
Therefore the morphism defined by the left-hand diagram is as below.
\begin{eqnarray*}
e_0 \otimes e_0 &\mapsto & \{2j\} e_0 \otimes e_0 \otimes e_0\\
&\mapsto & \{2j\} e_0 \otimes e_0 \\
&\mapsto & \{2j\}\{2i+2k\} e_0 \otimes e_0 \otimes e_0\\
&\mapsto & \{2j\}\{2i+2k\} e_0 \otimes e_0;\\
e_0 \otimes e_1 &\mapsto & \{2j-2k\} e_0\otimes e_0 \otimes e_1 +\{2k\}q^{-2(j-k)}e_0 \otimes e_1 \otimes e_0\\
&\mapsto & \{2j-2k\} e_0 \otimes e_1 +\{2k\}q^{-2(j-k)}q^{2i} e_1 \otimes e_0\\
&\mapsto & \{2j-2k\}\{2i+2k\} e_0 \otimes e_0 \otimes e_1\\
&& +\{2k\}q^{-2(j-k)}q^{2i} [ \{2l\}e_0 \otimes e_1 \otimes e_0 + \{2i+2k-2l\}q^{-2l}e_1 \otimes e_0 \otimes e_0]\\
&\mapsto & \{2j-2k\}\{2i+2k\}q^{2l} e_0 \otimes e_1\\
&& +\{2k\}q^{-2(j-k)+2i} [ \{2l\}e_0 \otimes e_1 + \{2i+2k-2l\}q^{-2l}e_1 \otimes e_0]\\
&=& (\{2j-2k\}\{2i+2k\}q^{2l} +\{2k\}\{2l\}q^{2(i-j+k)}) e_0 \otimes e_1\\
&& +  \{2i+2k-2l\}\{2k\} q^{2(i-j+k-l)}e_1 \otimes e_0;\\
e_1\otimes e_0 &\mapsto & \{2j\} e_1 \otimes e_0 \otimes e_0\\
&\mapsto & \{2j\} e_1  \otimes e_0\\
&\mapsto & \{2j\} [\{2i+2k-2l\}q^{-2l} e_1 \otimes e_0 \otimes e_0+ \{2l\} e_0 \otimes e_1 \otimes e_0]\\
&\mapsto & \{2j\} \{2i+2k-2l\}q^{-2l} e_1 \otimes e_0 + \{2j\}\{2l\} e_0 \otimes e_1;\\
e_1\otimes e_1 &\mapsto & \{2j-2k\} e_1 \otimes e_0 \otimes e_1\\
&\mapsto & \{2j-2k\} e_1 \otimes e_1\\
&\mapsto & \{2j-2k\}\{2i+2k-2l\}q^{-2l} e_1 \otimes e_0 \otimes e_1\\
&\mapsto & \{2j-2k\}\{2i+2k-2l\}q^{-2l} q^{2l}e_1  \otimes e_1=\{2j-2k\}\{2i+2k-2l\}e_1  \otimes e_1.
\end{eqnarray*}
The matrix for the morphism is 
$$
A= \left(
    \begin{array}{cccc}
      \{2j\}\{2i+2k\} & 0 & 0 & 0\\
      0 & a_{22} & \{2j\}\{2l\} & 0 \\
      0 & a_{32}  & \{2j\} \{2i+2k-2l\}q^{-2l} & 0 \\
      0 & 0 & 0 & \{2j-2k\}\{2i+2k-2l\}
    \end{array}
  \right),
$$ where $a_{22}=\{2j-2k\}\{2i+2k\}q^{2l} +\{2k\}\{2l\}q^{2(i-j+k)}$ and $a_{32}=\{2i+2k-2l\}\{2k\} q^{2(i-j+k-l)}$.

The two diagrams on the right-hand side have the following local states with non-vanishing Boltzmann weights:
\begin{align*}
\begin{tikzpicture}[baseline=-0.65ex, thick, scale=1.2]
\draw [dotted] (0,-1) [->-] to  (0.5,-0.33);
\draw [dotted] (0.5, -0.33) [->-] to  (0.5,0.33);
\draw [dotted] (0.5, 0.33) [->] to  (0,1);
\draw [dotted] (1,-1) [->-] to  (0.5,-0.33);
\draw [dotted] (1,1) [<-] to  (0.5,0.33);
\end{tikzpicture} \quad
\begin{tikzpicture}[baseline=-0.65ex, thick, scale=1.2]
\draw [dotted] (0,-1) [->-] to  (0.5,-0.33);
\draw  (0.5, -0.33) [->-] to  (0.5,0.33);
\draw [dotted] (0.5, 0.33) [->] to  (0,1);
\draw  (1,-1) [->-] to  (0.5,-0.33);
\draw  (1,1) [<-] to  (0.5,0.33);
\end{tikzpicture} \quad
\begin{tikzpicture}[baseline=-0.65ex, thick, scale=1.2]
\draw [dotted] (0,-1) [->-] to  (0.5,-0.33);
\draw  (0.5, -0.33) [->-] to  (0.5,0.33);
\draw  (0.5, 0.33) [->] to  (0,1);
\draw  (1,-1) [->-] to  (0.5,-0.33);
\draw  [dotted] (1,1) [<-] to  (0.5,0.33);
\end{tikzpicture} \quad
\begin{tikzpicture}[baseline=-0.65ex, thick, scale=1.2]
\draw  (0,-1) [->-] to  (0.5,-0.33);
\draw  (0.5, -0.33) [->-] to  (0.5,0.33);
\draw  (0.5, 0.33) [->] to  (0,1);
\draw  [dotted] (1,-1) [->-] to  (0.5,-0.33);
\draw  [dotted] (1,1) [<-] to  (0.5,0.33);
\end{tikzpicture} \quad
\begin{tikzpicture}[baseline=-0.65ex, thick, scale=1.2]
\draw  (0,-1) [->-] to  (0.5,-0.33);
\draw  (0.5, -0.33) [->-] to  (0.5,0.33);
\draw   [dotted] (0.5, 0.33) [->] to  (0,1);
\draw  [dotted] (1,-1) [->-] to  (0.5,-0.33);
\draw  (1,1) [<-] to  (0.5,0.33);
\end{tikzpicture} \quad
\end{align*} and
\begin{align*}
\begin{tikzpicture}[baseline=-0.65ex, thick, scale=1.2]
\draw [dotted] (0,-1) [->-] to (0, 0.33);
\draw [dotted] (0, 0.33) [->] to (0,1);
\draw [dotted] (1,-1) [->-] to (1, -0.33);
\draw [dotted] (1, -0.33) [->] to (1,1);
\draw [dotted] (0,0.33) [-<-]  to  (1,-0.33);
\end{tikzpicture}\quad
\begin{tikzpicture}[baseline=-0.65ex, thick, scale=1.2]
\draw [dotted] (0,-1) [->-] to (0, 0.33);
\draw [dotted] (0, 0.33) [->] to (0,1);
\draw (1,-1) [->-] to (1, -0.33);
\draw (1, -0.33) [->] to (1,1);
\draw [dotted] (0,0.33) [-<-]  to  (1,-0.33);
\end{tikzpicture}\quad
\begin{tikzpicture}[baseline=-0.65ex, thick, scale=1.2]
\draw [dotted] (0,-1) [->-] to (0, 0.33);
\draw  (0, 0.33) [->] to (0,1);
\draw (1,-1) [->-] to (1, -0.33);
\draw [dotted] (1, -0.33) [->] to (1,1);
\draw  (0,0.33) [-<-]  to  (1,-0.33);
\end{tikzpicture}\quad
\begin{tikzpicture}[baseline=-0.65ex, thick, scale=1.2]
\draw  (0,-1) [->-] to (0, 0.33);
\draw  (0, 0.33) [->] to (0,1);
\draw [dotted] (1,-1) [->-] to (1, -0.33);
\draw [dotted] (1, -0.33) [->] to (1,1);
\draw  [dotted] (0,0.33) [-<-]  to  (1,-0.33);
\end{tikzpicture}\quad
\begin{tikzpicture}[baseline=-0.65ex, thick, scale=1.2]
\draw  (0,-1) [->-] to (0, 0.33);
\draw  (0, 0.33) [->] to (0,1);
\draw  (1,-1) [->-] to (1, -0.33);
\draw  (1, -0.33) [->] to (1,1);
\draw  [dotted] (0,0.33) [-<-]  to  (1,-0.33);
\end{tikzpicture}.
\end{align*}
The matrices for the morphisms are 
$$
B = \left(
    \begin{array}{cccc}
      \{2i+2j\} & 0 & 0 & 0\\
      0 & \{2j+2l-2k\}q^{2i} & \{2j+2l-2k\} & 0 \\
      0 & \{2i+2k-2l\}q^{-2(j+l-k-i)}  & \{2i+2k-2l\}q^{-2(j+l-k)} & 0 \\
      0 & 0 & 0 & 0
    \end{array}
  \right)
$$ and 
$$
C = \left(
    \begin{array}{cccc}
      \{2j\} & 0 & 0 & 0\\
      0 & \{2j+2l-2k\} & 0& 0 \\
      0 & \{2k-2l\}q^{-2(j+l-k-i)} & \{2j\} & 0 \\
      0 & 0 & 0 & \{2j+2l-2k\}
    \end{array}
  \right).
$$
We claim that $\displaystyle A=\frac{\{2j\}\{2l\}}{\{2j+2l-2k\}}\cdot B + \frac{\{2i+2k-2l\}\{2j-2k\}}{\{2j+2l-2k\}} \cdot C$.
For the $(1, 1)$-entry, we have
\begin{eqnarray*}
\{2j\}\{2i+2k\}&=&\{2j\}\frac{\{2i+2k\}\{2j+2l-2k\}}{\{2j+2l-2k\}}\\
&\overset{\eqref{eq3}}{=}&\{2j\}\frac{\{2i+2j\}\{2l\}+\{2i+2k-2l\}\{2j-2k\}}{\{2j+2l-2k\}}.\\
\end{eqnarray*}
In the second equality, we apply \eqref{eq3} by regarding $a=2j+2l-2k$, $b=2i+2k-2l$ and $c=2l$.
For the $(2,2)$-entry, we have
$$
\{2j-2k\}\{2i+2k\}q^{2l}\overset{\eqref{eq1}}{=}\{2j-2k\}(\{2i+2k-2l\}+\{2l\}q^{2i+2k})\\
$$ by regarding $a=-2l$ and $b=2i+2k$ in \eqref{eq1}, and
$$
\{2k\}\{2l\}q^{2(i-j+k)}\overset{\eqref{eq1}}{=}\{2l\}q^{2i}(\{2j\}-\{2j-2k\}q^{2k})
$$ by regarding $a=2j-2k$ and $b=2k$ in \eqref{eq1}. Therefore 
$$a_{22}=\{2j\}\{2l\}q^{2i}+\{2i+2k-2l\}\{2j-2k\}.$$
For the $(3,2)$-entry, by comparing both sides, it is enough to verify that
$$
\{2k\}=\frac{\{2j\}\{2l\}+\{2k-2l\}\{2j-2k\}}{\{2j+2l-2k\}},
$$
which follows from \eqref{eq3} of Lemma \ref{identity} by regarding $a=2k$, $b=2j-2k$ and $c=2l$. For the $(3, 3)$-entry, by comparing both sides, it is enough to verify that
$$
q^{-2l}=\frac{\{2l\}q^{-2(j+l-k)}+\{2j-2k\}}{\{2j+2l-2k\}},
$$
which follows from \eqref{eq1} of Lemma \ref{identity} by regarding $a=2j-2k$ and $b=2l$. The equality on the other entries can be checked by direct comparison.

\vspace{2mm}

(viii) We prove the first equality of (viii). From Table 1, we can see that the matrix for the diagram on the left-hand side is
$$ 
A = \left(
    \begin{array}{cccc}
      q^{i+j-iJ-jI} & 0 & 0 & 0\\
      0 & \displaystyle \frac{(q^{4i}-1)}{q^{i+j+iJ+jI}} & q^{i-j-iJ-jI}  & 0 \\
      0 & q^{j-i-iJ-jI}  & 0 & 0 \\
      0 & 0 & 0 & -q^{-i-j-iJ-jI}
    \end{array}
  \right).
$$ 
For the diagrams on the right-hand side, the morphisms for them have been calculated in the proof of (vii). Let $k-l=j-i$. The matrices of the morphisms are respectively
$$
B = \left(
    \begin{array}{cccc}
      \{2i+2j\} & 0 & 0 & 0\\
      0 & \{2i\}q^{2i} & \{2i\} & 0 \\
      0 & \{2j\}  & \{2j\}q^{-2i} & 0 \\
      0 & 0 & 0 & 0
    \end{array}
  \right)
$$ and 
$$
C = \left(
    \begin{array}{cccc}
      \{2j\} & 0 & 0 & 0\\
      0 & \{2i\} & 0 & 0 \\
      0 & \{2j-2i\}  & \{2j\} & 0 \\
      0 & 0 & 0 & \{2i\}
    \end{array}
  \right).
$$
We see that $\displaystyle A=\frac{q^{i-j-iJ-jI}}{\{2i\}}B+\frac{-q^{-i-j-iJ-jI}}{\{2i\}}C$.
\end{proof}


\section{Relations between $\Delta_{(G, c)}(t)$ and $\underline{\Delta}(G, c)$}
For a graph $G$, consider a coloring $c$ of it for which the weight on each edge is one. It is easy to see that the admissibility condition for the weights is satisfied. In this case, we simply suppress the weights and the coloring becomes a map $c: E\to \mathbb{Z}\backslash \{0\}$ which sends each edge to its multiplicity. The admissibility conditions simply become one condition $\sum_{i=1}^{3} \epsilon_i j_i=0$, which coincides with the definition of balanced coloring in \cite{alex}. We recall its definition here.

\begin{defn}
\rm
For a graph $G$, let $E$ be the set of its edges. 
A map $c: E\to \mathbb{Z}$ is called a {\it balanced coloring} if for each vertex $v$ of $G$, we have
$$\sum_{e: \text{ pointing into } v} c(e)=\sum_{e: \text{ pointing out of } v} c(e).$$ If further each $c(e)>0$, we call $c$ a {\it positive coloring}.
\end{defn}

For a positive coloring $c$, in \cite{alex} we defined a single-variable polynomial $\Delta_{(G, c)}(t)$ and studied its MOY-type relations.

We briefly review the definition of $\Delta_{(G, c)}(t)$. For more details and properties of it, please refer to \cite{alex}. Let $X$ be the complement of $G$ in $S^3$, which is obtained by removing from $S^3$ the interior of a tubular neighborhood of $G$ in $S^3$. The balanced coloring $c$ of $G$ naturally defines a homomorphism 
\begin{eqnarray*}
\phi_{c}: \pi_1(X, x_0) &\to& \mathbb{Z}\langle t \rangle\\
\text{oriented meridian of $e$} &\mapsto& t^{c(e)},
\end{eqnarray*}
where $e\in E$ and $\mathbb{Z}\langle t \rangle=\{t^k \mid k\in \mathbb{Z}\}$ is the abelian group generated by $t$. Then $\ker (\phi_{c})$ corresponds to a regular covering space of $X$, which we call $p: \widetilde X \to X$.
Consider the $\mathbb{Z} [t^{-1}, t]$-module $H_1 (\widetilde X, p^{-1} (\partial_{\mathrm{in}} (X)))$, where $\partial_{\mathrm{in}} (X):=\bigcup_{v\in V}\partial_{\mathrm{in}} (v)\subset \partial (X)$ and $\partial_{\mathrm{in}} (v)$ is a subsurface around vertex $v$ 
bounded by meridians of the edges pointing toward $v$ and the ``meridian" around $v$, as shown in Fig. \ref{e10}.
The polynomial $\Delta_{(G, c)}(t)$ is the $0$-th characteristic polynomial of a presentation matrix of the module (See \cite[Section 7.2]{MR1417494} for the definition of characteristic polynomial). Modulo $\pm \mathbb{Z}\langle t \rangle$, it becomes a topological invariant of $G$.

\begin{figure}
	\centering
		\includegraphics[width=0.6\textwidth]{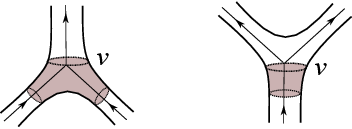}
	\caption{The shadow region indicates $\partial_{\mathrm{in}} (v)$ around a vertex.}
	\label{e10}
\end{figure}

A multi-variable version of $\Delta_{(G, c)}(t)$ was first studied in \cite{bao}, where we showed that it is the Euler characteristic of the Heegaard Floer homology of $G$ studied there. This polynomial was further studied intensively in \cite{alex}, where we provided a normalized state sum formula for it, proved its topological invariance in combinatorial way and studied its MOY-type relations.

To state the relation between $\Delta_{(G, c)}(t)$ and $\underline{\Delta}(G, c)$, we divide the vertices of $G$ into odd and even types.
\begin{align*}
\begin{tikzpicture}[baseline=-0.65ex, thick, scale=1.2]
\draw  (0, 0) [->-] to (0,0.5);
\draw   (0,0.5) [->]  to  (0.66,1);
\draw   (0,0.5) [->]  to  (-0.66,1);
\draw (0,-0.5) node {odd type};
\end{tikzpicture} \hspace{2cm}
\begin{tikzpicture}[baseline=-0.65ex, thick, scale=1.2]
\draw  (0, 0.5) [->] to (0,1);
\draw   (0,0.5) [-<-]  to  (0.66,0);
\draw   (0,0.5) [-<-]  to  (-0.66,0);
\draw (0,-0.5) node {even type};
\end{tikzpicture}
\end{align*}
Define {\it the color of a vertex} $v$, which we call $c(v)$, to be the sum of the colors of the edges pointing out of $v$, which by admissibility conditions equals the sum of colors of the edges pointing into $v$. Now we are ready to prove the main result.

\begin{theo}
\label{maintheorem}
For a graph $G$, let $c$ be a coloring whose multiplicities are positive and weights are one. We have 
$$
\Delta_{(G, c)}(q^{-4}) =\frac{\prod_{\text{$v$: even type}} \{2c(v)\}_q }{(q^{2}-q^{-2})^{\vert V \vert-1}}\underline{\Delta}(G, c),
$$ where $\vert V \vert$ is the number of vertices in $G$.
\end{theo}
\begin{proof}
We proved in \cite[Theorem 4.3]{alex} that the MOY-type relations (i)--(x) in \cite[Theorem 4.1]{alex} determine $\Delta_{(G, c)}(t)$. Putting $t=q^{-4}$, we just need to show that the right-hand side of Theorem \ref{maintheorem} satisfies the same relations, by applying Theorem \ref{moy}. Precisely, we get the following table by comparing the relations in \cite[Theorem 4.1]{alex} (the first row of the table) and those in Theorem \ref{moy} (the second row).

\begin{table}[h]
\renewcommand{\arraystretch}{1.2}
 \begin{tabular}{|c|c|c|c|c|c|c|c|} \hline
\multicolumn{1}{|c|}{} & \multicolumn{7}{|c|}{equivalent pairs of relations}  \\
\hline
$\Delta_{(G, c)}(q^{-4})$ & (i, ii, iii) & (iv) & (v, vi)& (vii) & (viii) & (ix) & (x)\\ 
\hline 
  \vspace{-2mm}   & &&&&&& \\
$ \frac{\prod_{\text{$v$: even type}} \{2c(v)\}_q }{(q^{2}-q^{-2})^{\vert V \vert-1}}\underline{\Delta}(G, c)$ & (i, ii, iii) &(viii)& (iv)& (v)&(vi) & (vii)& (0)  \\[7pt]
\hline
 \end{tabular}
\end{table}
\end{proof}


\section{A concrete correspondence when $G$ is a plane graph}
In this section we show that when $G$ is a plane graph and $c$ is a positive coloring the vertex state sum formula for $\underline{\Delta}(G, c)$ has a topological interpretation, where a state is identified with a tuple of intersection points of the ${\alpha}$-curves and ${\beta}$-curves on a Heegaard diagram associated with $G$ and the Boltzmann weights come from the Fox calculus on the Heegaard surface. 

In Section 4.1 we give a simplified version of the vertex state sum for $\underline{\Delta}(G, c)$ in Prop. \ref{sum}. In Section 4.2 we construct a Heegaard diagram associated with $G$ and show a one-one correspondence between the states for $\underline{\Delta}(G, c)$ and the intersection points of ${\alpha}$-curves and ${\beta}$-curves on the Heegaard diagram. In Section 4.3 we study a vertex state sum formula for $\Delta_{(G, c)}(t)$ and compare it with that for $\underline{\Delta}(G, c)$.

\subsection{Vertex state sum formula for $\underline{\Delta}(G, c)$ when $G$ is a plane graph}
For a plane graph $G$, when all the multiplicities of the coloring $c$ are positive and the framing is the blackboard framing, we can simplify the vertex state sum for $\underline{\Delta}(G, c)$ in Prop. \ref{sum} as follows. 

Choose an initial point $\delta$ on an outermost edge of $G$. A {\it state} is a map $s: E \to \{0, 1\}$ which sends the edge containing $\delta$ to zero and satisfies the condition that at each vertex, the sum of $s(e)$ for all the edges $e$ pointing toward the vertex equals that for the edges pointing out of the vertex. Let $\mathcal{S}$ be the set of states. By definition, around each vertex we have the following six possibilities under a state, where the dotted edges are those which are sent to zero and the solid edges are those which are sent to one.
\begin{align*}
\begin{tikzpicture}[baseline=-0.65ex, thick, scale=1.2]
\draw [dotted] (0, 0) [->-] to (0,0.5);
\draw  [dotted] (0,0.5) [->]  to  (0.66,1);
\draw  [dotted] (0,0.5) [->]  to  (-0.66,1);
\end{tikzpicture} \quad
\begin{tikzpicture}[baseline=-0.65ex, thick, scale=1.2]
\draw  (0, 0) [->-] to (0,0.5);
\draw   (0,0.5) [->]  to  (0.66,1);
\draw  [dotted] (0,0.5) [->]  to  (-0.66,1);
\end{tikzpicture} \quad
\begin{tikzpicture}[baseline=-0.65ex, thick, scale=1.2]
\draw  (0, 0) [->-] to (0,0.5);
\draw  [dotted] (0,0.5) [->]  to  (0.66,1);
\draw   (0,0.5) [->]  to  (-0.66,1);
\end{tikzpicture} \hspace{1cm}
\begin{tikzpicture}[baseline=-0.65ex, thick, scale=1.2]
\draw [dotted] (0, 0.5) [->] to (0,1);
\draw  [dotted] (0,0.5) [-<-]  to  (0.66,0);
\draw [dotted]  (0,0.5) [-<-]  to  (-0.66,0);
\end{tikzpicture} \quad
\begin{tikzpicture}[baseline=-0.65ex, thick, scale=1.2]
\draw  (0, 0.5) [->] to (0,1);
\draw   (0,0.5) [-<-]  to  (0.66,0);
\draw  [dotted] (0,0.5) [-<-]  to  (-0.66,0);
\end{tikzpicture} \quad
\begin{tikzpicture}[baseline=-0.65ex, thick, scale=1.2]
\draw  (0, 0.5) [->] to (0,1);
\draw  [dotted] (0,0.5) [-<-]  to  (0.66,0);
\draw   (0,0.5) [-<-]  to  (-0.66,0);
\end{tikzpicture}
\end{align*}
The solid edges in a given state $s$ of $G$ constitute a collection of simple closed curves. We call them {\it{solid curves}} for $s$. Define the sign of a state $s$ to be $$\mathrm{sign}(s)=(-1)^{\text{the number of solid curves for $s$}}.$$ 

From $G$ we can obtain a collection of oriented simple closed curves $L_{(G, c)}$ by the local transformation depicted in Figure \ref{fig:e24}, where we replace each edge of color $j$ by $j$ parallel strands. We assume that each curve of $L_{(G, c)}$ is colored by $1$.

\begin{figure}
\begin{tikzpicture}[baseline=-0.65ex, thick, scale=0.8]
\draw (0,-1)  [->]-- (0,1);
\draw (0.3, 0.8) node {$4$} ;
\draw (1, 0) node {$\Longrightarrow$} ;
\draw (2,-1)  [->]-- (2,1);
\draw (2.5,-1)  [->]-- (2.5,1);
\draw (3,-1)  [->]-- (3,1);
\draw (3.5,-1)  [->]-- (3.5,1);
\end{tikzpicture}\quad \quad\quad
\begin{tikzpicture}[baseline=-0.65ex, thick, scale=0.8]
\draw (0,-1)  [->-]-- (0,0);
\draw (0,0)  [->]-- (1,1);
\draw (0,0)  [->]-- (-1,1);
\draw (1.3, 0.8) node {$3$} ;
\draw (-1.3, 0.8) node {$1$} ;
\draw (0.3, -0.8) node {$4$} ;
\end{tikzpicture}$\Longrightarrow$
\begin{tikzpicture}[baseline=-0.65ex, thick, scale=0.8]
\draw (1.2,-1) to [out=90, in=270] (1.2,0) [->] to [out=90,in=225] (2,1);
\draw (0.7,-1) to [out=90, in=270] (0.7,0) [->] to [out=90,in=225] (1.5,1);
\draw (-0.3,-1) to [out=90, in=270] (-0.3,0) [->] to [out=90,in=315] (-1.1,1);
\draw (0.2,-1) to [out=90, in=270] (0.2,0) [->] to [out=90,in=225] (1,1);
\end{tikzpicture}
	\caption{The way we obtain the link $L_{(G, c)}$ from $G$.}
	\label{fig:e24}
\end{figure}
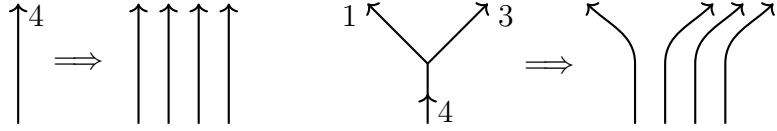

Define $\mathrm{Rot}(G, c, \delta)$ to be the sum of the rotation numbers of those simple closed curves in $L_{(G, c)}$ which are disjoint with $\delta$. Here the rotation number of a simple closed curve is defined to be $1$ (resp. $-1$) if it is clockwise (resp. counter-clockwise).  

We define the weight of a state at a vertex $v$ as in Table \ref{table1}. Then we get the following state sum formula for $\underline{\Delta}(G, c)$.

\begin{table}
\begin{tabular} {|c|c|c|c|c|c|c|} \hline
    State & \begin{tikzpicture}[baseline=-0.65ex, thick]
\draw [dotted] (0, -0.5) [->-] to (0,0);
\draw  [dotted] (0,0) [->]  to  (0.66,0.5);
\draw  [dotted] (0,0) [->]  to  (-0.66,0.5);
\draw (-0.66, 0.75) node {$i$};
\draw (0.66, 0.75) node {$j$};
\draw (0, -0.75) node {$i+j$};
\end{tikzpicture}
     & \begin{tikzpicture}[baseline=-0.65ex, thick]
\draw  (0, -0.5) [->-] to (0,0);
\draw  (0,0) [->]  to  (0.66,0.5);
\draw  [dotted] (0,0) [->]  to  (-0.66,0.5);
\draw (-0.66, 0.75) node {$i$};
\draw (0.66, 0.75) node {$j$};
\draw (0, -0.75) node {$i+j$};
\end{tikzpicture}
      & \begin{tikzpicture}[baseline=-0.65ex, thick]
\draw  (0, -0.5) [->-] to (0,0);
\draw  [dotted] (0,0) [->]  to  (0.66,0.5);
\draw   (0,0) [->]  to  (-0.66,0.5);
\draw (-0.66, 0.75) node {$i$};
\draw (0.66, 0.75) node {$j$};
\draw (0, -0.75) node {$i+j$};
\end{tikzpicture}
& \begin{tikzpicture}[baseline=-0.65ex, thick]
\draw [dotted] (0, 0) [->] to (0,0.5);
\draw  [dotted] (0,0) [-<-]  to  (0.66,-0.5);
\draw [dotted]  (0,0) [-<-]  to  (-0.66,-0.5);
\draw (-0.66, -0.75) node {$i$};
\draw (0.66, -0.75) node {$j$};
\draw (0, 0.75) node {$i+j$};
\end{tikzpicture}
& \begin{tikzpicture}[baseline=-0.65ex, thick]
\draw  (0, 0) [->] to (0,0.5);
\draw   (0,0) [-<-]  to  (0.66,-0.5);
\draw [dotted]  (0,0) [-<-]  to  (-0.66,-0.5);
\draw (-0.66, -0.75) node {$i$};
\draw (0.66, -0.75) node {$j$};
\draw (0,0.75) node {$i+j$};
\end{tikzpicture}
& \begin{tikzpicture}[baseline=-0.65ex, thick]
\draw  (0, 0) [->] to (0,0.5);
\draw  [dotted] (0,0) [-<-]  to  (0.66,-0.5);
\draw   (0,0) [-<-]  to  (-0.66,-0.5);
\draw (-0.66, -0.75) node {$i$};
\draw (0.66, -0.75) node {$j$};
\draw (0, 0.75) node {$i+j$};
\end{tikzpicture}
       \\ \hline 
  \vspace{-2mm}     &&&&&&\\
$Wt(v; s)$ & $\{2i+2j\}_q$   & $\{2j\}_q$ & $\{2i\}_qq^{-2j}$ &$1$ &$q^{2i}$&$1$\\ 
 &&&&&&\\
\hline 
\end{tabular}
\vspace{3mm}
\caption{The weight of a state at a vertex for $\underline{\Delta}(G, c)$.}
\label{table1}
\end{table}

\begin{prop} 
\label{vsum}
For a plane graph $G$ with the blackboard framing and a positive coloring $c$, we have
\begin{equation}
\underline{\Delta}(G, c)=\frac{ q^{2\mathrm{Rot}(G, c, \delta)} }{q^{2j}-q^{-2j}}\sum_{s\in \mathcal{S}}\mathrm{sign}(s) \prod_{v\in V}Wt(v;s),
\label{eq4}\tag{4}
\end{equation}
where $j$ is the color of the edge containing $\delta$.
\end{prop}
\begin{proof}
We can assume that the edges around each vertex of $G$ point upward after an isotopy of $\mathbb{R}^2$ and that the edge containing $\delta$ is a leftmost edge of $G$. Namely $G$ is in the following position around a vertex. 
\begin{align*}
\begin{tikzpicture}[baseline=-0.65ex, thick, scale=1]
\draw  (0, 0) [->-] to (0,0.5);
\draw   (0,0.5) [->]  to  (0.66,1);
\draw   (0,0.5) [->]  to  (-0.66,1);
\end{tikzpicture} \hspace{1cm} \text{or} \hspace{1cm} 
\begin{tikzpicture}[baseline=-0.65ex, thick, scale=1]
\draw  (0, 0.5) [->] to (0,1);
\draw   (0,0.5) [-<-]  to  (0.66,0);
\draw   (0,0.5) [-<-]  to  (-0.66,0);
\end{tikzpicture}
\end{align*}
The left-hand side of (\ref{eq4}), as a topological invariant, does not change under the isotopy. We show that for such a graph diagram the vertex state sum of $\underline{\Delta}(G, c)$ that we stated in Prop. \ref{sum} coincides with the right-hand side of (\ref{eq4}).

Recall that a state in Prop. \ref{sum} is defined to be an assignment of $e_0$ to the edge containing $\delta$ and $e_0$ or $e_1$ to any of the other edges. Therefore $\mathcal{S}$ is a subset of the set of states in Prop. \ref{sum}. From the Boltzmann weights in Table 2 we see that any state which is not in $\mathcal{S}$ has no contribution to $\underline{\Delta}(G, c)$. Therefore we only need to consider the states in $\mathcal{S}$. 

Since $G$ is a plane graph with blackboard framing, there is no crossings and half-twist symbols on the diagram. We only need to consider the Boltzmann weights at vertices and at critical points.
For $s\in \mathcal{S}$, the product of Boltzmann weights at the vertices is exactly $\prod_{v\in V}Wt(v;s)$. To finish the proof we need to show that the product of the Boltzmann weights at the critical points is $$\mathrm{sign}(s)q^{2[\mathrm{Rot} (G, c, \delta)-(-1)^{\theta}j]},$$ where $\theta$ is defined in Prop. \ref{sum}. 

Note that for an oriented simple closed curve $U$ in $\mathbb{R}^2$ colored by a positive integer $j$, if we assign $e_i$ to it, the product of the Boltzmann weights at its critical points is $(-1)^{i}q^{\mathrm{Rot}(U)\cdot 2j}$ for $i=0, 1$, where $\mathrm{Rot}(U)$ is the rotation number of $U$. Only the sign $(-1)^{i}$ depends on the state which is $1$ for dotted curve (assignment of $e_0$) and $1$ for solid curve (assignment of $e_1$), while the $q$-power $q^{\mathrm{Rot}(U)\cdot 2j}$ only depends on the coloring. 

For a state $s\in \mathcal{S}$ of $G$, the product of the Boltzmann weights at all critical points consists of two factors: the sign, which is given by the number of solid curves of $s$ and thus is $\mathrm{sign}(s)$, and the $q$-power, which is $q^{2[\mathrm{Rot} (G, c, \delta)-(-1)^{\theta}j]}$ as we see below. 

Recall that $L_{(G, c)}$ is a collection of simple closed curves colored by $1$. Let $L_{(G, c)}$ be in the Morse position inherited from that of $G$. Then the $q$-power factor of the product of the Boltzmann weights at all the critical points of $L_{(G, c)}$ is $$\prod_{U\in L_{(G, c)} }q^{2\mathrm{Rot}(U)}=q^{\sum_{U\in L_{(G, c)}}2\mathrm{Rot}(U)} =q^{2\mathrm{Rot} (G, c, \delta)-(-1)^{\theta}2j},$$ where $-(-1)^{\theta}2j$ is the sum of the rotation numbers of curves of $L_{(G, c)}$ that pass $\delta$. Since $\delta$ is on a leftmost edge of $G$, $-(-1)^{\theta}2j$ is $2j$ when the edge points upward around $\delta$, and $-2j$ otherwise. From the construction of $L_{(G, c)}$ and the Boltzmann weights defined for critical points, we see that $q^{2\mathrm{Rot} (G, c, \delta)-(-1)^{\theta}2j}$ is the $q$-power factor of the product of the Boltzmann weights at all the critical points of $G$ as well.

After multiplying by $\frac{q^{(-1)^{\theta}2j}}{q^{2j}-q^{-2j}}$, as in the last step of Prop \ref{sum}, we get the desired equality. 
\end{proof}

\subsection{A Heegaard diagram for $G$.}

For an oriented plane graph $G$ without source or sink in $\mathbb{R}^2$, we introduce a Heegaard diagram, which is inspired by the Heegaard diagram in \cite[Section 5]{MR2529302}. The construction is as follows. Let $S^2=\mathbb{R}^2\cup \{\infty\}$.
\begin{enumerate}
\item The Heegaard surface is $S^{2}$ where $G$ is embedded as a plane graph. 
\item Choose an initial point $\delta$ on an edge of $G$. We regard this initial point as a new vertex of $G$.
\item At each vertex, introduce a base point $w$, and on each edge of $G$ introduce a base point $z$. Let $\boldsymbol{w}$ and $\boldsymbol{z}$ be the set of $w$'s and $z$'s respectively.
\item Around each vertex $v$, introduce a curve $\alpha_v$ which encloses the base point $w$ at $v$ and the base point(s) $z$('s) on the edge(s) pointing to $v$. Introduce a curve $\beta_v$ which encloses the base point $w$ at $v$ and the base point(s) $z$('s) on the edge(s) pointing out of $v$. 
\item Remove the $\alpha$-curve and $\beta$-curve around $\delta$.
\end{enumerate}
As a result, we get the following data $H=(S^{2}, \{\alpha_v\}_{v\in V}, \{\beta_v\}_{v\in V}, \boldsymbol{w}, \boldsymbol{z})$, where $V$ is the set of vertices of $G$ not including $\delta$. See Fig. \ref{fig:f1} for an example of the Heegaard diagram. 

The data $H$ constructed above is a Heegaard diagram for $(G, \delta)$. To be precise, from $(G, \delta)$ we construct a new graph $\widetilde{G}$ as below by inserting a thick edge at each vertex and at $\delta$ and splitting the vertex into two vertices.
\begin{align*}
\begin{tikzpicture}[baseline=-0.65ex, thick, scale=1]
\draw  (0, 0) [->-] to (0,0.5);
\draw   (0,0.5) [->]  to  (0.66,1);
\draw   (0,0.5) [->]  to  (-0.66,1);
\draw (1.25, 0.5) node {$\Rightarrow$};
\end{tikzpicture} 
\begin{tikzpicture}[baseline=-0.65ex, thick, scale=1]
\draw  (0, -0.25) [->-] to (0,0.25);
\draw   (0,0.75) [->]  to  (0.66,1.25);
\draw   (0,0.75) [->]  to  (-0.66,1.25);
\draw [ultra thick] (0, 0.25) [->-] to (0,0.75);
\end{tikzpicture}
\hspace{1cm}
\begin{tikzpicture}[baseline=-0.65ex, thick, scale=1]
\draw  (0, 0.5) [->] to (0,1);
\draw   (0,0.5) [-<-]  to  (0.66,0);
\draw   (0,0.5) [-<-]  to  (-0.66,0);
\draw (1.25, 0.5) node {$\Rightarrow$};
\end{tikzpicture}
\begin{tikzpicture}[baseline=-0.65ex, thick, scale=1]
\draw  (0, 0.75) [->] to (0,1.25);
\draw   (0,0.25) [-<-]  to  (0.66,-0.25);
\draw   (0,0.25) [-<-]  to  (-0.66,-0.25);
\draw [ultra thick] (0, 0.25) [->-] to (0,0.75);
\end{tikzpicture}
\hspace{1cm}
\begin{tikzpicture}[baseline=-0.65ex, thick, scale=1]
\draw  (0, 0) [->] to (0,1);
\draw (-0.3, 0.5) node{$\delta$};
\draw [fill] (0, 0.5) circle [radius=.05];
\draw (0.75, 0.5) node {$\Rightarrow$};
\end{tikzpicture}\hspace{0.25cm}
\begin{tikzpicture}[baseline=-0.65ex, thick, scale=1]
\draw  (0, -0.25) [->-] to (0,0.25);
\draw  [ultra thick] (0, 0.25) [->-] to (0,0.75);
\draw  (0, 0.75) [->] to (0,1.25);
\end{tikzpicture}
\end{align*}
Then $\widetilde{G}$ becomes a balanced bipartite graph with these thick edges. Then we can check that $H$ is a Heegaard diagram for $\widetilde{G}$ (see \cite[Definition 3.1]{bao}). For simplicity we suppress the thick edges and say that $H$ is a Heegaard diagram for $(G, \delta)$.

Consider the intersection points of $\alpha$-curves and $\beta$-curves. Let $$\mathcal{T}:=\{\{x_v\}_{v\in V}\vert x_v\in \alpha_{\sigma(v)} \cap \beta_v \text{ for a bijection $\sigma$ of $V$}\}.$$
Let $\sigma_x$ be the bijection corresponding to $x\in \mathcal{T}$. Since each $\alpha$-curve and $\beta$-curve is a simple closed curve in $\mathbb{R}^2$, we call the disk bounded by an $\alpha$-curve (resp. $\beta$-curve) an {\it $\alpha$-disk} (resp. {\it $\beta$-disk}). From the construction we see that $\alpha$-disks (resp. $\beta$-disks) are disjoint from each other, while the intersection of $\alpha$-disks and $\beta$-disks are disjoint union of disks, which we call {\it bigons}. Each bigon contains a base point (either $z$ or $w$) in its interior and contains on its boundary two intersection points of an $\alpha$-curve and a $\beta$-curve.
Two elements $x=\{x_v\}_{v\in V}$ and $y=\{y_v\}_{v\in V}$ are said to be equivalent ($x \sim y$) if $x_v$ and $y_v$ belong to the same bigon for any $v\in V$. Let $[x]$ be the equivalence class of $x$. It is obvious that $\sigma_x$ keeps invariant within the equivalence class of $x$.

\begin{figure}
	\centering
		\includegraphics[width=0.6\textwidth]{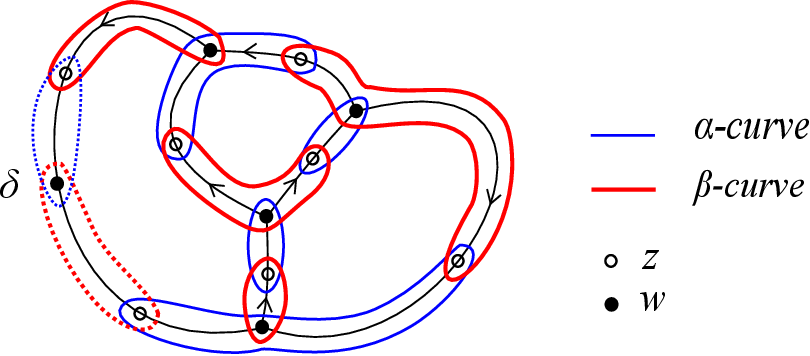}
	\caption{A Heegaard diagram for $G$ with an initial point $\delta$.}
	\label{fig:f1}
\end{figure}

\begin{prop}
\label{bijection}
There is a bijection $\psi: \mathcal{T}/ \mathord{\sim} \to \mathcal{S}$. 
\end{prop}
\begin{proof}
The map $\psi: \mathcal{T}/ \mathord{\sim} \to \mathcal{S}$ is constructed as follows. Let $z_e\in \boldsymbol{z}$ be the base point on an edge $e$. Then for $x=\{x_v\}_{v\in V}$, $\psi([x])$
sends $e$ to $1$ if there is $x_v\in x$ belonging to the bigon containing $z_e$. The other edges are sent to zero. 

To prove that $\psi([x])$ is an element of $\mathcal{S}$, it is enough to show that the following situations can not occur under $\psi([x])$.
Around an even vertex $v$, the following cases can not appear under $\psi([x])$:
\begin{align*}
\begin{tikzpicture}[baseline=-0.65ex, thick, scale=1.2]
\draw  (0, 0.5) [->] to (0,1);
\draw   (0,0.5) [-<-]  to  (0.66,0);
\draw   (0,0.5) [-<-]  to  (-0.66,0);
\end{tikzpicture} \quad\quad
\begin{tikzpicture}[baseline=-0.65ex, thick, scale=1.2]
\draw [dotted] (0, 0.5) [->] to (0,1);
\draw   (0,0.5) [-<-]  to  (0.66,0);
\draw   (0,0.5) [-<-]  to  (-0.66,0);
\end{tikzpicture} \quad\quad
\begin{tikzpicture}[baseline=-0.65ex, thick, scale=1.2]
\draw [dotted] (0, 0.5) [->] to (0,1);
\draw  [dotted] (0,0.5) [-<-]  to  (0.66,0);
\draw   (0,0.5) [-<-]  to  (-0.66,0);
\end{tikzpicture} \quad\quad
\begin{tikzpicture}[baseline=-0.65ex, thick, scale=1.2]
\draw [dotted] (0, 0.5) [->] to (0,1);
\draw   (0,0.5) [-<-]  to  (0.66,0);
\draw  [dotted] (0,0.5) [-<-]  to  (-0.66,0);
\end{tikzpicture} \quad\quad
\begin{tikzpicture}[baseline=-0.65ex, thick, scale=1.2]
\draw  (0, 0.5) [->] to (0,1);
\draw  [dotted] (0,0.5) [-<-]  to  (0.66,0);
\draw  [dotted] (0,0.5) [-<-]  to  (-0.66,0);
\end{tikzpicture}.
\end{align*}
The first and second cases can not appear since $\alpha_v \cap x$ has a unique element. The third and fourth cases do not appear since $\beta_v \cap x$ is not empty. The fifth case can not appear since $\alpha_v \cap x$ is not empty.  

Around an odd vertex $v$, the following cases can not appear under $\psi([x])$ for similar reasons:
\begin{align*}
\begin{tikzpicture}[baseline=-0.65ex, thick, scale=1.2]
\draw  (0, 0) [->-] to (0,0.5);
\draw   (0,0.5) [->]  to  (0.66,1);
\draw   (0,0.5) [->]  to  (-0.66,1);
\end{tikzpicture} \quad\quad
\begin{tikzpicture}[baseline=-0.65ex, thick, scale=1.2]
\draw [dotted] (0, 0) [->-] to (0,0.5);
\draw   (0,0.5) [->]  to  (0.66,1);
\draw   (0,0.5) [->]  to  (-0.66,1);
\end{tikzpicture} \quad\quad
\begin{tikzpicture}[baseline=-0.65ex, thick, scale=1.2]
\draw  [dotted](0, 0) [->-] to (0,0.5);
\draw  [dotted] (0,0.5) [->]  to  (0.66,1);
\draw   (0,0.5) [->]  to  (-0.66,1);
\end{tikzpicture} \quad\quad
\begin{tikzpicture}[baseline=-0.65ex, thick, scale=1.2]
\draw  [dotted](0, 0) [->-] to (0,0.5);
\draw   (0,0.5) [->]  to  (0.66,1);
\draw  [dotted] (0,0.5) [->]  to  (-0.66,1);
\end{tikzpicture} \quad\quad
\begin{tikzpicture}[baseline=-0.65ex, thick, scale=1.2]
\draw  (0, 0) [->-] to (0,0.5);
\draw  [dotted] (0,0.5) [->]  to  (0.66,1);
\draw [dotted]  (0,0.5) [->]  to  (-0.66,1);
\end{tikzpicture}.
\end{align*}

To show that $\psi$ is a bijection, we need to construct its inverse map $\varphi: \mathcal{S} \to \mathcal{T}/ \mathord{\sim}$. 
Consider a state $s\in \mathcal{S}$. If a vertex $v$ belongs to a solid curve $\gamma$, let $e$ be the solid edge pointing from $v$ to another vertex $v'$ on $\gamma$. Then let $x_v\in \alpha_{v'}\cap \beta_v$ be on the bigon containing $z_e$. If a vertex $v$ does not belong to any solid curve, let $x_v \in \alpha_{v}\cap \beta_v$ be on the bigon containing the base point $w_v$ at $v$. We can check that each $\alpha_v$ and $\beta_v$ is occupied exactly once. Therefore $x=\{x_v\}_{v\in V}\in \mathcal{T}$. Since $[x]$ does not depend on which intersection point we choose on a bigon, $\varphi$ is a well-defined map. We see that both $\psi\circ \varphi$ and $\varphi\circ\psi$ are identity maps.

\end{proof}

\begin{lemma}
\label{signeq}
Under the bijection $\psi$ in Prop. \ref{bijection} we have $$\mathrm{sign}(\psi([x]))=\mathrm{sign}(\sigma_x)\prod_{v\in V}(-1)^{\delta_{v, \sigma_x(v)}+1},$$ for any $x\in \mathcal{T}$, where $\delta_{v, \sigma_x(v)}$ is the Kronecker delta.
\end{lemma}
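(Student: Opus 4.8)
The plan is to convert the statement into a purely combinatorial identity about the permutation $\sigma_x$ by means of the geometry of the Heegaard diagram, and then to verify that identity by a short sign count.

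First I would set up the dictionary between the intersection points constituting $x$ and the oriented edges of $G$. By the construction of $H$, the only base points around which an $\alpha$-curve and a $\beta$-curve meet are the vertex base points $w_v$, where $\alpha_v$ meets $\beta_v$, and the edge base points $z_e$ for an edge $e\colon v\to v'$, where $z_e$ is enclosed by $\beta_v$ (as $e$ points out of $v$) and by $\alpha_{v'}$ (as $e$ points toward $v'$), so that $\beta_v$ meets $\alpha_{v'}$. Hence the intersection points sitting around $z_e$ lie in $\alpha_{v'}\cap\beta_v$, indexed by the tail $v$. Consequently, for the point $x_v\in\alpha_{\sigma_x(v)}\cap\beta_v$, either $x_v$ lies around $w_v$, in which case $\sigma_x(v)=v$, or $x_v$ lies around $z_e$ for a unique outgoing edge $e\colon v\to v'$, in which case $\sigma_x(v)=v'\neq v$. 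By the definition of $\psi$, this says precisely that an edge $e\colon v\to v'$ is solid under $\psi([x])$ if and only if $x_v$ sits around $z_e$, i.e.\ if and only if $\sigma_x(v)=v'$; in particular $v$ lies on a solid edge exactly when $\sigma_x(v)\neq v$.

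Next I would read off the global structure of the solid curves. At a vertex $v$ with $\sigma_x(v)\neq v$ there is exactly one outgoing solid edge, pointing toward $\sigma_x(v)$; and since $\sigma_x$ is a bijection, the unique incoming solid edge at $v$ issues from $\sigma_x^{-1}(v)$. Thus the solid subgraph passes through $v$ exactly once, and traversing it along the orientation of $G$ amounts to applying $\sigma_x$, while fixed points of $\sigma_x$ carry no solid edge. Therefore the solid curves are in bijection with the cycles of $\sigma_x$ of length at least two, and if $c$ denotes the number of such non-trivial cycles, then $\mathrm{sign}(\psi([x]))=(-1)^{c}$.

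It then remains to match this with the right-hand side. Writing $n=|V|$ and letting $f$ be the number of fixed points of $\sigma_x$, the factor $\prod_v(-1)^{\delta_{v,\sigma_x(v)}+1}$ contributes $+1$ for each fixed point and $-1$ for each non-fixed point, hence equals $(-1)^{\,n-f}$. Since the total number of cycles of $\sigma_x$ is $f+c$, one has $\mathrm{sign}(\sigma_x)=(-1)^{\,n-(f+c)}$, so that
\[
\mathrm{sign}(\sigma_x)\prod_{v\in V}(-1)^{\delta_{v,\sigma_x(v)}+1}=(-1)^{\,n-f-c}(-1)^{\,n-f}=(-1)^{-c}=(-1)^{c},
\]
in agreement with $\mathrm{sign}(\psi([x]))$ computed above. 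The main obstacle is the first step: pinning down exactly which $\alpha$- and $\beta$-curves meet around each base point and showing that $\sigma_x$ encodes precisely the outgoing solid edge at each vertex, with the fixed points of $\sigma_x$ being the all-dotted vertices. Once this geometric dictionary is established — and once one notes that the initial point $\delta$, whose edge is always dotted and around which no $\alpha$- or $\beta$-curve survives, plays no role, so that $\sigma_x$ is genuinely a permutation of $V$ — the identification of solid curves with non-trivial cycles of $\sigma_x$ and the concluding sign bookkeeping are routine.
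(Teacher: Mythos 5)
Your proof is correct and takes essentially the same route as the paper: both identify the solid curves of $\psi([x])$ with the non-trivial cycles of $\sigma_x$ and then reduce the claim to the elementary sign count $(-1)^{k-1}\cdot(-1)^{k}=-1$ per cycle (you organize this globally via the number of fixed points, the paper per solid curve, which is the same computation). Your explicit verification that $z_e$ for $e\colon v\to v'$ is the unique base point type carrying points of $\alpha_{v'}\cap\beta_v$ is a detail the paper leaves implicit, but it does not change the argument.
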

\begin{proof}
Each solid curve for $\psi([x])$ consisting of $k$ edges corresponds to a $k$-cycle of $\sigma_x$. Its contribution to $\mathrm{sign}(\sigma_x)$ is $(-1)^{k-1}$. On the other hand, each solid edge contributes  minus one to $\prod_{v\in V}(-1)^{\delta_{v, \sigma_x(v)}+1}$. Therefore 
\begin{eqnarray*}
\mathrm{sign}(\sigma_x)\prod_{v\in V}(-1)^{\delta_{v, \sigma_x(v)}+1}&=&\prod_{\text{solid curve of length $k$}, \,\, k\in \mathbb{N}} (-1)^{k-1} (-1)^{k}\\
&=& \prod_{\text{solid curve of length $k$}, \,\, k\in \mathbb{N}} (-1)\\
&=& (-1)^{\text{the number of solid curves for $\psi([x])$}}=\mathrm{sign}(\psi([x])).
\end{eqnarray*}
\end{proof}


\subsection{A vertex state sum formula for $\Delta_{(G, c)}(t)$}
The purpose of this section is to prove Proposition \ref{weaker}.
Let $G$ be a connected plane graph with a positive coloring $c$. Let $X$ be the complement of $G$ in $S^3$. Recall that in Section 3 we defined a map $\phi_c: \pi_1(X, x_0)\to \mathbb{Z}\langle t\rangle$ which sends an oriented meridian of an edge $e$ to $t^{c(e)}$. Let $\mathbb{Z}\pi_1(X, x_0)$ and $\mathbb{Z}[t, t^{-1}] $ be the group rings of $\pi_1(X, x_0)$ and $\mathbb{Z}\langle t\rangle$ respectively. For convenience, we consider a large ring $\mathbb{Z}[t^{1/2}, t^{-1/2}]\supset \mathbb{Z}[t, t^{-1}]$. Then $\phi_c$ naturally induces a ring homomorphism, which we still call $\phi_c$:
\begin{equation}
\phi_c: \mathbb{Z}\pi_1(X, x_0) \to \mathbb{Z}[t^{1/2}, t^{-1/2}]. \tag{5}
\end{equation}

Consider the Heegaard diagram $H$ constructed in Section 4.2. Following \cite[Section 5.3]{bao} or \cite[Section 5]{MR2805998}, we can calculate a determinant $\det (\phi_c(\frac{\partial \beta_v}{\partial \alpha_u}))_{u, v\in V}$ using Fox calculus. For a general sutured manifold, \cite{MR2805998} studied the Euler characteristic of the sutured Floer homology. In \cite[Section 5]{bao}, we explained the theory for the concrete case of bipartite graphs. 

We leave the theoretical discussions to \cite[Sections 4, 5]{MR2805998} and explain below how to calculate the determinant $\det (\phi_c(\frac{\partial \beta_v}{\partial \alpha_u}))_{u, v\in V}$.

\begin{enumerate}
\item Note that each $\alpha$-curve or $\beta$-curve bounds an $\alpha$-disk or $\beta$-disk in $\mathbb{R}^2$. Choose an orientation of $\mathbb{R}^2$ so that each $\alpha$-disk or $\beta$-disk induces counter-clockwise orientation on its boundary, which is the given $\alpha$-curve or $\beta$-curve.
\item For an edge $e$, let $c_e$ be an oriented simple arc on $\mathbb{R}^2$ connecting the base point $z_e$ to the base point $w$ at the vertex that $e$ points to. 
\item For each $\beta$-curve $\beta_v$, consider its intersection with $\alpha$-curves and $c_e$'s. If an intersection point between $\alpha_u$ (resp. $c_e$) and $\beta_v$ has positive sign, record it by $\alpha_u$ (resp. $c_e$), and $\alpha_u^{-1}$ (resp. $c_e^{-1}$) if the sign is negative. In this way, we write $\beta_v$ as a word, which is a product of elements of $\alpha_u$, $\alpha_u^{-1}$, $c_e$ and $c_e^{-1}$ for $u\in V$ and $e\in E$.
\item Calculate the Fox calculus $\frac{\partial \beta_v}{\partial \alpha_u}$ for any $u, v\in V$, which becomes a finite sum of words in $\alpha_w$, $\alpha_w^{-1}$, $c_e$ and $c_e^{-1}$ for $w\in V$ and $e\in E$. See \cite[Chapter 7]{MR1417494} for the definition and an application of Fox calculus. 
\item Let $\phi_c(\alpha_w)=\phi_c(\alpha_w^{-1})=1$, $\phi_c(c_e)=t^{c(e)}$ and $\phi_c(c_e^{-1})=t^{-c(e)}$. Then $\phi_c(\frac{\partial \beta_v}{\partial \alpha_u})$ becomes an element of $\mathbb{Z}[t^{1/2}, t^{-1/2}]$. The determinant $\det (\phi_c(\frac{\partial \beta_v}{\partial \alpha_u}))_{u, v\in V}$ is thus an element of $\mathbb{Z}[t^{1/2}, t^{-1/2}]$ as well. 
\end{enumerate}

Note that $\det (\phi_c(\frac{\partial \beta_v}{\partial \alpha_u}))_{u, v\in V}$ is only well-defined modulo $\pm \mathbb{Z}\langle t^{1/2} \rangle$, where $\mathbb{Z}\langle t^{1/2} \rangle=\{t^{k/2}\mid k\in \mathbb{Z}\}$. For three Laurent polynomials $f(t), g(t), h(t)\neq 0$ in $\mathbb{Z}[t^{1/2}, t^{-1/2}]$, we denote $$\frac{f(t)}{h(t)}\overset{\cdot}{=} \frac{g(t)}{h(t)}$$ if the two sides are equal modulo $\pm \mathbb{Z}\langle t^{1/2} \rangle$. We have the following lemma.

\begin{lemma}
\label{lA}
Let $G$ be a connected plane graph with a positive coloring $c$. We have
\begin{equation*}
(t^{1/2}-t^{-1/2})^{\vert V\vert -1} \Delta_{(G, c)}(t) \overset{\cdot}{=}\frac{1}{t^{j/2}-t^{-j/2}}\det (\phi_c(\frac{\partial \beta_v}{\partial \alpha_u}))_{u, v\in V},
\end{equation*}
where $j$ is the color of the edge containing the initial point $\delta$. 
\end{lemma}
\begin{proof}
By the definition of $\Delta_{(G, c)}(t)$ in \cite[Definition 3.5]{alex}, we have $$\Delta_{(G, c)}(t)\overset{\cdot}{=} \frac{\langle G \vert \delta \rangle }{(t^{1/2}-t^{-1/2})^{\vert V\vert -1} (t^{j/2}-t^{-j/2})},$$ 
where the state sum $\langle G \vert \delta \rangle$ defined in \cite[(1)]{alex} coincides with $\phi_c(\Delta_{\widetilde{G}})$ defined in \cite[(2)]{bao}. Therefore $\langle G \vert \delta \rangle$ and $\det (\phi_c(\frac{\partial \beta_v}{\partial \alpha_u}))_{u, v\in V}$ both calculate the Euler characteristic of the Heegaard Floer homology of $\widetilde G$. Namely we have $\langle G \vert \delta \rangle \overset{\cdot}{=} \det (\phi_c(\frac{\partial \beta_v}{\partial \alpha_u}))_{u, v\in V}$, which implies the statement of the lemma. 
\end{proof}

\vspace{3mm}
 Our next step is to calculate $\det (\phi_c(\frac{\partial \beta_v}{\partial \alpha_u}))_{u, v\in V}$.
Around an even vertex $v$ with the following coloring
\begin{figure}[h!]
	\centering
		\includegraphics[width=0.5\textwidth]{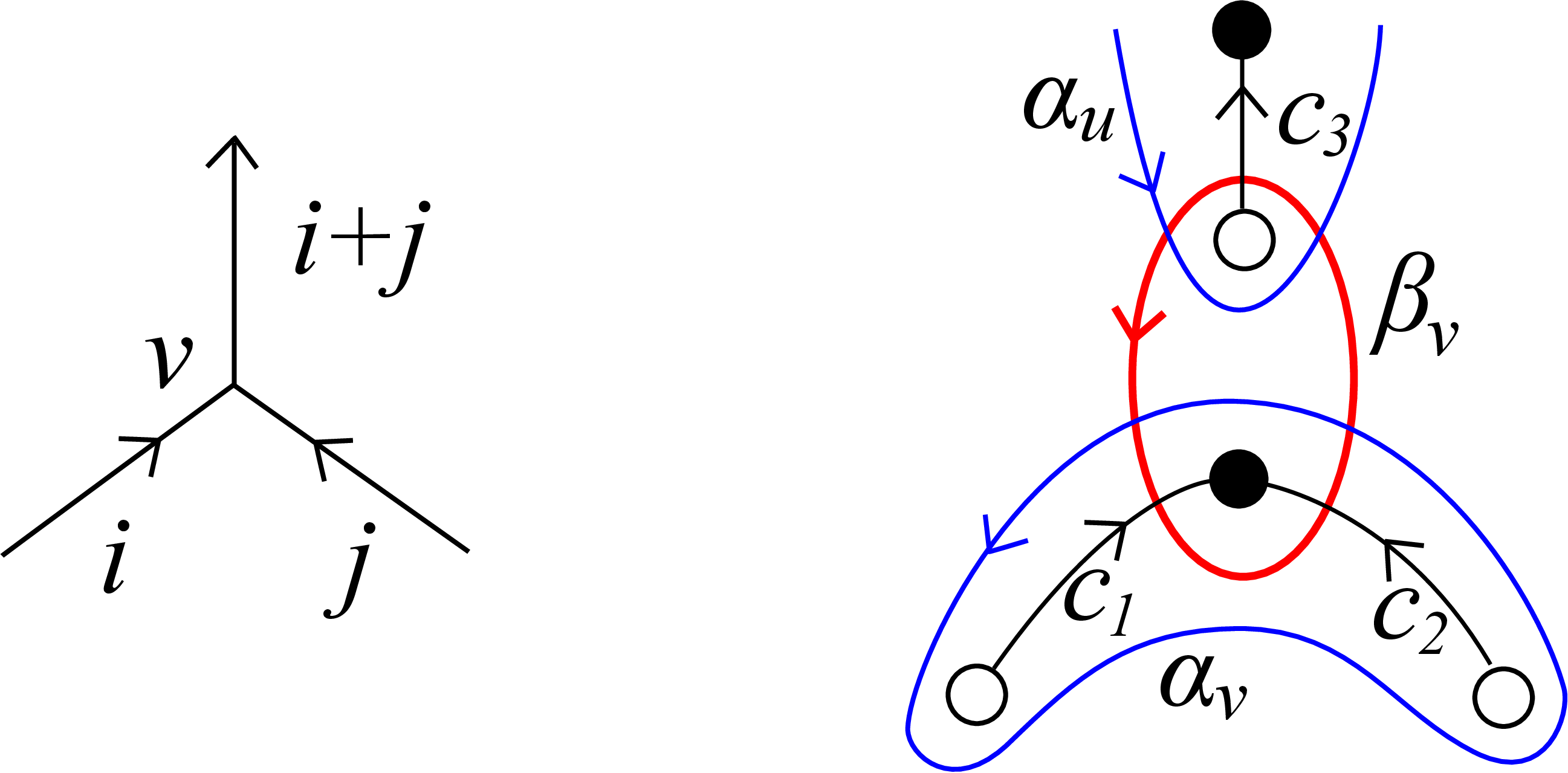}
	\label{fig:f2}
\end{figure}
we have
$\beta_v=\alpha_u c_3 \alpha_u^{-1}\alpha_v c_1^{-1}c_2^{-1}\alpha_v^{-1}$, where $c_1$, $c_2$ and $c_3$ are $c_e$'s for $e$ being the edge with color $i$, $j$ and $i+j$ respectively.
Therefore
\begin{eqnarray*}
\phi_c(\frac{\partial \beta_v}{\partial \alpha_u}) &\overset{\cdot}{=}&
\phi_c(1-\alpha_u c_3 \alpha_u^{-1})=1-\phi_c(c_3)=1-t^{i+j},\\
\phi_c(\frac{\partial \beta_v}{\partial \alpha_v})&\overset{\cdot}{=}&\phi_c(\alpha_u c_3 \alpha_u^{-1}(1-\alpha_v c_1^{-1}c_2^{-1}\alpha_v^{-1}))\\
&=&\phi_c(c_3)(1-\phi_c(c_1^{-1}c_2^{-1}))=t^{i+j}(1-t^{-i}t^{-j})=t^{i+j}-1.
\end{eqnarray*}
For any other $\alpha$-curve $\alpha_w$ disjoint with $\beta_v$, the Fox calculus $\frac{\partial \beta_v}{\partial \alpha_w}$ is zero. Multiplying each element in the row of $(\phi_c(\frac{\partial \beta_v}{\partial \alpha_u}))_{u, v\in V}$ that corresponds to $\beta_v$ by $t^{-(i+j)/2}$, the determinant does not change modulo $\mathbb{Z}\langle t^{1/2}\rangle$. Summarizing the discussion above we have
\begin{equation}
\label{eq6}
\phi_c(\frac{\partial \beta_v}{\partial \alpha_u}) \overset{\cdot}{=} \begin{cases}
\{\frac{i+j}{2}\}_t & u=v \\
-\{\frac{i+j}{2}\}_t & \text{$u$ and $v$ are joined by the edge with color $i+j$}\\
0 & \text{otherwise},
\end{cases}
\tag{6}
\end{equation}
where $\{k\}_t:=t^{k}-t^{-k}$ for $k\in \frac{1}{2}\mathbb{Z}$.

\vspace{3mm}

Around an odd vertex $v$ as below
\begin{figure}[h!]
	\centering
		\includegraphics[width=0.5\textwidth]{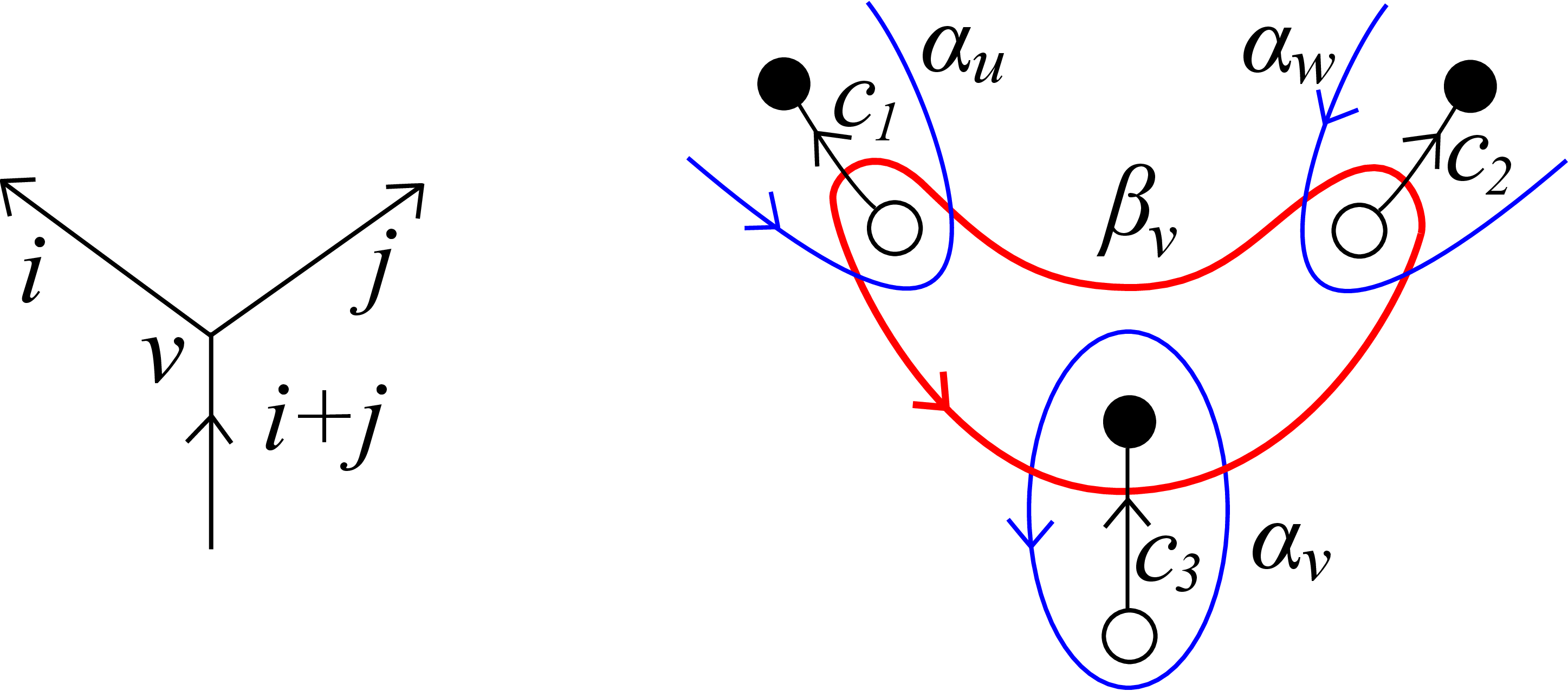}
	\label{fig:f3}
\end{figure}
we have $\beta_v=\alpha_vc_3^{-1}\alpha_v^{-1}\alpha_wc_2\alpha_w^{-1}\alpha_uc_1\alpha_u^{-1}$. 

Therefore
\begin{eqnarray*}
\phi_c(\frac{\partial \beta_v}{\partial \alpha_v}) &\overset{\cdot}{=}&
\phi_c(1-\alpha_v c_3^{-1} \alpha_v^{-1})=1-\phi_c(c_3^{-1})=1-t^{-(i+j)},\\
\phi_c(\frac{\partial \beta_v}{\partial \alpha_w})&\overset{\cdot}{=}&\phi_c(\alpha_v c_3^{-1} \alpha_v^{-1}(1-\alpha_w c_2\alpha_w^{-1}))\\
&=&\phi_c(c_3^{-1})(1-\phi_c(c_2))=t^{-(i+j)}(1-t^{j}),\\
\phi_c(\frac{\partial \beta_v}{\partial \alpha_u})&\overset{\cdot}{=}&\phi_c(\alpha_v c_3^{-1} \alpha_v^{-1}\alpha_wc_2\alpha_w^{-1}(1-\alpha_u c_1\alpha_u^{-1}))\\
&=&\phi_c(c_3^{-1}c_2)(1-\phi_c(c_1))=t^{-(i+j)}t^{j}(1-t^{i})=t^{-i}(1-t^{i}).
\end{eqnarray*}
For any other $\alpha$-curve $\alpha_r$ disjoint with $\beta_v$, the Fox calculus $\frac{\partial \beta_v}{\partial \alpha_r}$ is zero.
Multiplying each element in the row of $\phi_c(\frac{\partial \beta_v}{\partial \alpha_u}))_{u, v\in V}$ that corresponds to $\beta_v$ by $t^{(i+j)/2}$, the determinant does not change modulo $\mathbb{Z}\langle t^{1/2}\rangle$. Summarizing the discussion above we have
\begin{equation}
\label{eq7}
\phi_c(\frac{\partial \beta_v}{\partial \alpha_u}) \overset{\cdot}{=} \begin{cases}
\{\frac{i+j}{2}\}_t  & u=v \\
-t^{j/2}\{\frac{i}{2}\}_t & \text{$u$ and $v$ are joined by the edge with color $i$}\\
-t^{-i/2}\{\frac{j}{2}\}_t & \text{$u$ and $v$ are joined by the edge with color $j$}\\
0 & \text{otherwise}.
\end{cases}
\tag{7}
\end{equation}
Note that it is possible that $u$ and $v$ are joined by the two edges with colors $i$ and $j$. In this case $\phi_c(\frac{\partial \beta_v}{\partial \alpha_u})$ equals the sum of the second and the third terms.

\vspace{3mm}

Summarizing the discussion above we can prove the following lemma.

\begin{lemma}
\label{lB}
For a bijection $\sigma$ of $V$ and $v\in V$ we have
\begin{eqnarray*}
\phi_c(\frac{\partial \beta_v}{\partial \alpha_{\sigma(v)}}) \overset{\cdot}{=}  (-1)^{\delta_{v, \sigma(v)}+1} \sum_{[x]\in \mathcal{T}/\mathord{\sim}, \sigma_x=\sigma} \widetilde{Wt}(v, [x]),
\end{eqnarray*}
where $\widetilde{Wt}(v, [x])$ is defined in Table \ref{table2}.
\end{lemma}
\begin{proof}
Note that different elements in $\mathcal{T}/\mathord{\sim}$ may correspond to the same bijection $\sigma$. To get the right equality, we need to take the sum over all $[x]\in \mathcal{T}/\mathord{\sim}$ with $\sigma_x=\sigma$. The term $(-1)^{\delta_{v, \sigma(v)}+1}$ comes from the fact that if $v\neq \sigma(v)$ there is a minus sign in $\displaystyle \phi_c(\frac{\partial \beta_v}{\partial \alpha_{\sigma(v)}})$, as we can see in \eqref{eq6} and \eqref{eq7}.
\end{proof}

\begin{table}
\begin{tabular} {|c|c|c|c|c|c|c|} \hline
State $\psi([x])$ & \begin{tikzpicture}[baseline=-0.65ex, thick]
\draw [dotted] (0, -0.5) [->-] to (0,0);
\draw  [dotted] (0,0) [->]  to  (0.66,0.5);
\draw  [dotted] (0,0) [->]  to  (-0.66,0.5);
\draw (-0.66, 0.75) node {$i$};
\draw (0.66, 0.75) node {$j$};
\draw (0, -0.75) node {$i+j$};
\end{tikzpicture}
     & \begin{tikzpicture}[baseline=-0.65ex, thick]
\draw  (0, -0.5) [->-] to (0,0);
\draw  (0,0) [->]  to  (0.66,0.5);
\draw  [dotted] (0,0) [->]  to  (-0.66,0.5);
\draw (-0.66, 0.75) node {$i$};
\draw (0.66, 0.75) node {$j$};
\draw (0, -0.75) node {$i+j$};
\end{tikzpicture}
      & \begin{tikzpicture}[baseline=-0.65ex, thick]
\draw  (0, -0.5) [->-] to (0,0);
\draw  [dotted] (0,0) [->]  to  (0.66,0.5);
\draw   (0,0) [->]  to  (-0.66,0.5);
\draw (-0.66, 0.75) node {$i$};
\draw (0.66, 0.75) node {$j$};
\draw (0, -0.75) node {$i+j$};
\end{tikzpicture}
& \begin{tikzpicture}[baseline=-0.65ex, thick]
\draw [dotted] (0, 0) [->] to (0,0.5);
\draw  [dotted] (0,0) [-<-]  to  (0.66,-0.5);
\draw [dotted]  (0,0) [-<-]  to  (-0.66,-0.5);
\draw (-0.66, -0.75) node {$i$};
\draw (0.66, -0.75) node {$j$};
\draw (0, 0.75) node {$i+j$};
\end{tikzpicture}
& \begin{tikzpicture}[baseline=-0.65ex, thick]
\draw  (0, 0) [->] to (0,0.5);
\draw   (0,0) [-<-]  to  (0.66,-0.5);
\draw [dotted]  (0,0) [-<-]  to  (-0.66,-0.5);
\draw (-0.66, -0.75) node {$i$};
\draw (0.66, -0.75) node {$j$};
\draw (0,0.75) node {$i+j$};
\end{tikzpicture}
& \begin{tikzpicture}[baseline=-0.65ex, thick]
\draw  (0, 0) [->] to (0,0.5);
\draw  [dotted] (0,0) [-<-]  to  (0.66,-0.5);
\draw   (0,0) [-<-]  to  (-0.66,-0.5);
\draw (-0.66, -0.75) node {$i$};
\draw (0.66, -0.75) node {$j$};
\draw (0, 0.75) node {$i+j$};
\end{tikzpicture}
       \\ \hline
       \vspace{-2mm}& &&&&&\\
$\widetilde{Wt}(v; [x])$ & $\{\frac{i+j}{2}\}_t$ & $\{\frac{j}{2}\}_t t^{-i/2}$ & $\{\frac{i}{2}\}_t t^{j/2}$ &$\{\frac{i+j}{2}\}_t$ &$\{\frac{i+j}{2}\}_t$&$\{\frac{i+j}{2}\}_t$\\ 
& &&&&&\\
\hline 
\end{tabular}
\vspace{3mm}
\caption{The weight $\widetilde{Wt}(v; [x])$.}
\label{table2}
\end{table}

To find a relation with Table \ref{table1}, we need to replace Table \ref{table2} with another form, which is given by the following lemma.

\begin{lemma}
\label{modi}
For any $[x]\in \mathcal{T}/\mathord{\sim}$ we have 
$$\prod_{v\in V}\widetilde{Wt}(v;[x])=\prod_{v\in V}\widehat{Wt}(v;[x]),$$ where $\widehat{Wt}(v;[x])$ is defined in Table \ref{table3}.
\end{lemma}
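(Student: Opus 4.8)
The plan is to compare the two weight systems vertex by vertex and to show that they differ only by monomial factors in $t^{1/2}$ whose total product over the vertices is $1$. First I would observe, by inspecting Tables \ref{table2} and \ref{table3}, that the bracket factor $\{\cdot\}_t$ attached to each of the six local states is the same for $\widetilde{Wt}$ and for $\widehat{Wt}$ (at an even vertex this bracket is always $\{\frac{i+j}{2}\}_t$, and at an odd vertex it is $\{\frac{i+j}{2}\}_t$, $\{\frac{j}{2}\}_t$, or $\{\frac{i}{2}\}_t$ according to the solid state). The two weights can therefore differ only by a power $t^{g(v;[x])/2}$, where $g(v;[x])\in\mathbb{Z}$ is an integer combination of the colors of the edges incident to $v$. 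Consequently $\prod_{v}\widetilde{Wt}(v;[x])$ and $\prod_{v}\widehat{Wt}(v;[x])$ differ by the single global monomial $t^{\frac12\sum_{v}g(v;[x])}$, and it suffices to prove that $\sum_{v}g(v;[x])=0$.

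The key point is that each exponent $g(v;[x])$ decomposes as a sum of contributions indexed by the edges incident to $v$, each contribution being of the form $\varepsilon(e,v)\,c(e)$ with $\varepsilon(e,v)\in\{0,\pm1\}$ depending only on the color $c(e)$, on the solid/dotted status of $e$ under $\psi([x])$, and on whether $e$ points into or out of $v$. Since every edge $e$ has exactly one tail and one head, it is counted once through its tail and once through its head in the total sum $\sum_v g(v;[x])$. I would therefore show, for every edge $e$, that the half-power it contributes at its tail is the exact negative of the half-power it contributes at its head, i.e. $\varepsilon(e,\mathrm{tail})+\varepsilon(e,\mathrm{head})=0$; then all edge contributions cancel in pairs and $\sum_v g(v;[x])=0$. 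This is a discrete telescoping argument: the monomial discrepancies $t^{\pm c(e)/2}$ are attached to edges, and forming the product of the vertex weights redistributes each such factor to the two endpoints of $e$ with opposite signs.

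The main obstacle is the bookkeeping forced by the two asymmetries present in Table \ref{table2}. Even and odd vertices carry their monomial factors through different edges (the distinguished edge of color $i+j$ at an even vertex versus the two split edges of colors $i$ and $j$ at an odd vertex), and at an odd vertex the left and right outgoing edges enter with opposite-sign half-powers, as one sees from the factors $t^{-i/2}$ and $t^{j/2}$ in the two solid states of Table \ref{table2}. To control this I would tabulate, for each of the six local states in Tables \ref{table2} and \ref{table3}, the exponent $g(v;[x])$ explicitly as $\sum_{e\ni v}\varepsilon(e,v)\,c(e)$, and then verify the cancellation edge by edge, running over all ways a given edge can be incident to an even or odd vertex as its tail or head and in either the solid or the dotted state. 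Once $\varepsilon(e,\mathrm{tail})+\varepsilon(e,\mathrm{head})=0$ is confirmed in every one of these cases, the equality $\prod_{v}\widetilde{Wt}(v;[x])=\prod_{v}\widehat{Wt}(v;[x])$ follows at once.
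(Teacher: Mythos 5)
Your opening reduction is fine: the bracket factors in Tables \ref{table2} and \ref{table3} agree state by state, so the two products can differ only by a monomial $t^{\frac12\sum_v g(v;[x])}$ and the lemma amounts to $\sum_v g(v;[x])=0$. The gap is in the mechanism you propose for this vanishing. It is not true that $\varepsilon(e,v)$ depends only on the color of $e$, its solid/dotted status, and its direction at $v$: reading off the tables, the ratio $\widetilde{Wt}(v;[x])/\widehat{Wt}(v;[x])$ equals $t^{-i/2}$ precisely when $v$ is an odd vertex whose \emph{left} outgoing edge is dotted of color $i$ and whose other two edges are solid, equals $t^{+i/2}$ precisely when $v$ is an even vertex whose \emph{left} incoming edge is dotted of color $i$ and whose other two edges are solid, and equals $1$ in the other four local configurations. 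Thus the exponent a dotted edge contributes at an endpoint depends on whether that endpoint lies on a solid curve of $\psi([x])$ and on which side the edge sits there --- data that need not match at the two ends of the edge, so $\varepsilon(e,\mathrm{tail})+\varepsilon(e,\mathrm{head})=0$ fails. For a concrete failure, take the planar graph with vertices $u_1,u_2,v_1,v_2$ and edges $a\colon u_1\to u_2$, $b\colon u_2\to u_1$, $e_1\colon u_1\to v_1$, $e_2\colon v_2\to u_2$ and two parallel edges $v_1\to v_2$, with $\delta$ on $e_1$ and the bigon $a\cup b$ embedded so that $e_1,e_2$ emanate on its left. In the state whose only solid edges are $a$ and $b$, the edge $e_1$ contributes $t^{-c(e_1)/2}$ at its tail $u_1$ but $1$ at its head $v_1$ (which is all dotted), while $e_2$ contributes $1$ at its tail and $t^{+c(e_2)/2}$ at its head; neither edge cancels internally, and the total cancels only because $c(e_1)=c(e_2)$, an identity forced by conservation of the coloring around the solid bigon, not by anything local to either edge.

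This indicates the correct grouping, which is the one the paper uses: collect the nontrivial ratios according to the solid curve on which the contributing vertex lies. For a fixed solid curve $C$ of $\psi([x])$, the factors $t^{+i/2}$ come from dotted edges of color $i$ entering $C$ from its left side and the factors $t^{-i/2}$ from dotted edges leaving $C$ to its left side; summing the conservation law $\sum\epsilon_i j_i=0$ over all vertices in the region to the left of $C$ (equivalently, observing that a curve encircling $C$ is null-homologous in $S^3\setminus G$, as in the figure in the paper's proof) shows these two totals agree, so the product of the ratios over $C$ is $1$, and taking the product over all solid curves finishes the proof. If you wish to keep an edge-level bookkeeping, you must sum over all dotted edges attached to a given solid curve rather than over the two endpoints of a single edge.
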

\begin{proof}
The difference of $\widetilde{Wt}(v;[x])$ and $\widehat{Wt}(v;[x])$ appears around the solid curves of the state $\psi([x])$. Precisely
\begin{eqnarray*}
\frac{\widetilde{Wt}(v;[x])}{\widehat{Wt}(v;[x])}=\begin{cases}
t^{-i/2} & \psi([x]) \text{ has the form } \begin{tikzpicture}[baseline=-0.65ex, thick]
\draw  (0, -0.5) [->-] to (0,0);
\draw  (0,0) [->]  to  (0.66,0.5);
\draw  [dotted] (0,0) [->]  to  (-0.66,0.5);
\draw (-0.66, 0.75) node {$i$};
\draw (0.66, 0.75) node {$j$};
\draw (0, -0.75) node {$i+j$};
\end{tikzpicture} \text{ around $v$};\\
t^{i/2} & \psi([x]) \text{ has the form } \begin{tikzpicture}[baseline=-0.65ex, thick]
\draw  (0, 0) [->] to (0,0.5);
\draw   (0,0) [-<-]  to  (0.66,-0.5);
\draw [dotted]  (0,0) [-<-]  to  (-0.66,-0.5);
\draw (-0.66, -0.75) node {$i$};
\draw (0.66, -0.75) node {$j$};
\draw (0,0.75) node {$i+j$};
\end{tikzpicture} \text{ around $v$};\\
1 & \text{otherwise}.
\end{cases}
\end{eqnarray*} 
Therefore 
\begin{eqnarray*}
\prod_{v\in V} \frac{\widetilde{Wt}(v;[x])}{\widehat{Wt}(v;[x])}=\prod_{\text{$\gamma$: solid curve of $\psi([x])$}}\bigg(\prod_{v\in \gamma}\frac{\widetilde{Wt}(v;[x])}{\widehat{Wt}(v;[x])}\bigg)=\prod_{\text{$\gamma$: solid curve of $\psi([x])$}} 1=1.
\end{eqnarray*}
The first equality holds since the difference of $\widetilde{Wt}(v;[x])$ and $\widehat{Wt}(v;[x])$ only appears around the solid curves. Now we verify the second equality. Each solid curve $\gamma$ bounds a region $D\subset \mathbb{R}^2$ on its left-hand side when moving along $\gamma$. Let $\Gamma_{\mathrm{out}}$ (resp. $\Gamma_{\mathrm{in}}$) be the set of dotted edges in $D$ pointing out of (resp. into) $\gamma$. Then $\prod_{v\in \gamma}\frac{\widetilde{Wt}(v;[x])}{\widehat{Wt}(v;[x])}=t^{k/2}$, where
$$k=\sum_{e\in \Gamma_{\mathrm{out}}}c(e)-\sum_{e\in \Gamma_{\mathrm{in}}}c(e).$$ Note that after removing $\Gamma_{\mathrm{out}}\cup \Gamma_{\mathrm{in}}$, the graph $G$ becomes a disjoint union of two parts. Namely $$G\backslash (\Gamma_{\mathrm{out}}\cup \Gamma_{\mathrm{in}})=G_1 \cup G_2.$$ Then $\sum_{e\in \Gamma_{\mathrm{out}}}c(e)$ (resp. $\sum_{e\in \Gamma_{\mathrm{in}}}c(e)$) is the sum of colors of edges pointing from $G_1$ (resp. $G_2$) to $G_2$ (resp. $G_1$). Since $c$ is a balanced coloring, we see that $\sum_{e\in \Gamma_{\mathrm{out}}}c(e)=\sum_{e\in \Gamma_{\mathrm{in}}}c(e)$. Namely $k$ must be zero.
\end{proof}

\begin{table}
\begin{tabular} {|c|c|c|c|c|c|c|} \hline
    State $\psi([x])$ & \begin{tikzpicture}[baseline=-0.65ex, thick]
\draw [dotted] (0, -0.5) [->-] to (0,0);
\draw  [dotted] (0,0) [->]  to  (0.66,0.5);
\draw  [dotted] (0,0) [->]  to  (-0.66,0.5);
\draw (-0.66, 0.75) node {$i$};
\draw (0.66, 0.75) node {$j$};
\draw (0, -0.75) node {$i+j$};
\end{tikzpicture}
     & \begin{tikzpicture}[baseline=-0.65ex, thick]
\draw  (0, -0.5) [->-] to (0,0);
\draw  (0,0) [->]  to  (0.66,0.5);
\draw  [dotted] (0,0) [->]  to  (-0.66,0.5);
\draw (-0.66, 0.75) node {$i$};
\draw (0.66, 0.75) node {$j$};
\draw (0, -0.75) node {$i+j$};
\end{tikzpicture}
      & \begin{tikzpicture}[baseline=-0.65ex, thick]
\draw  (0, -0.5) [->-] to (0,0);
\draw  [dotted] (0,0) [->]  to  (0.66,0.5);
\draw   (0,0) [->]  to  (-0.66,0.5);
\draw (-0.66, 0.75) node {$i$};
\draw (0.66, 0.75) node {$j$};
\draw (0, -0.75) node {$i+j$};
\end{tikzpicture}
& \begin{tikzpicture}[baseline=-0.65ex, thick]
\draw [dotted] (0, 0) [->] to (0,0.5);
\draw  [dotted] (0,0) [-<-]  to  (0.66,-0.5);
\draw [dotted]  (0,0) [-<-]  to  (-0.66,-0.5);
\draw (-0.66, -0.75) node {$i$};
\draw (0.66, -0.75) node {$j$};
\draw (0, 0.75) node {$i+j$};
\end{tikzpicture}
& \begin{tikzpicture}[baseline=-0.65ex, thick]
\draw  (0, 0) [->] to (0,0.5);
\draw   (0,0) [-<-]  to  (0.66,-0.5);
\draw [dotted]  (0,0) [-<-]  to  (-0.66,-0.5);
\draw (-0.66, -0.75) node {$i$};
\draw (0.66, -0.75) node {$j$};
\draw (0,0.75) node {$i+j$};
\end{tikzpicture}
& \begin{tikzpicture}[baseline=-0.65ex, thick]
\draw  (0, 0) [->] to (0,0.5);
\draw  [dotted] (0,0) [-<-]  to  (0.66,-0.5);
\draw   (0,0) [-<-]  to  (-0.66,-0.5);
\draw (-0.66, -0.75) node {$i$};
\draw (0.66, -0.75) node {$j$};
\draw (0, 0.75) node {$i+j$};
\end{tikzpicture}
       \\ \hline 
 \vspace{-2mm}& &&&&&\\
$\widehat{Wt}(v; [x])$ & $\{\frac{i+j}{2}\}_t$   & $\{\frac{j}{2}\}_t$ & $\{\frac{i}{2}\}_tt^{j/2}$ &$\{\frac{i+j}{2}\}_t$ &$\{\frac{i+j}{2}\}_tt^{-i/2}$&$\{\frac{i+j}{2}\}_t$\\ 
& &&&&&\\
\hline 
\end{tabular}
\vspace{3mm}
\caption{The weight $\widehat{Wt}(v; [x])$.}
\label{table3}
\end{table}

\vspace{3mm}

The Boltzmann weights in Table \ref{table1} are now related to Table \ref{table3} in the following way.

\begin{lemma}
\label{lC}
For each state $s\in \mathcal{S}$, we have $$\prod_{v\in V}\widehat{Wt}(v;s) \vert _{t=q^{-4}}=(\prod_{\text{$v$: even type}}\{2c(v)\}_q)\prod_{v\in V}Wt(v;s).$$
\end{lemma}
\begin{proof}
Compare the values of $Wt(v;s)$ and $\widehat{Wt}(v;s)$ in Tables \ref{table1} and \ref{table3}. The relation between them follows from that $(t^{i/2}-t^{-i/2}) \vert _{t=q^{-4}} =-(q^{2i}-q^{-2i})$ and that there are even number of vertices in $G$ (since the vertices of even type equals that of odd type). 
\end{proof}

Now we are ready to prove the following proposition, which is a weaker form of Theorem \ref{maintheorem}. It is weaker in the sense that we only consider plane graphs and it is an equality modulo $\pm \mathbb{Z}\langle t^{1/2} \rangle$.

\begin{prop}[A weaker form of Theorem \ref{maintheorem}]
\label{weaker}
$$\Delta_{(G, c)}(t) \vert _{t=q^{-4}}\overset{\cdot}{=} \frac{\prod_{\text{$v$: even type}}\{2c(v)\}_q}{(q^{2}-q^{-2})^{\vert V\vert -1}} \underline{\Delta}(G, c).$$
\end{prop}
\begin{proof}
In the following calculations, the first equality is Lemma \ref{lA}. The third equality is Lemma \ref{lB}. The fifth equality is Lemma \ref{modi}. The sixth equality is Lemma \ref{signeq}. The seventh equality is Lemma \ref{lC}. The last equality is Proposition \ref{vsum}.
\begin{eqnarray*} 
&&(t^{1/2}-t^{-1/2})^{\vert V\vert -1} \Delta_{(G, c)}(t)\\
&\overset{\cdot}{=}&\frac{1}{t^{j/2}-t^{-j/2}}\det (\phi_c(\frac{\partial \beta_v}{\partial \alpha_u}))_{u, v\in V}\\
&=&\frac{1}{t^{j/2}-t^{-j/2}} \sum_{\text{$\sigma$: bijection of $V$}} \mathrm{sign}(\sigma) \prod_{v\in V}\phi_c(\frac{\partial \beta_v}{\partial \alpha_{\sigma(v)}} ) \\
&\overset{\cdot}{=}& \frac{1}{t^{j/2}-t^{-j/2}} \sum_{\text{$\sigma$: bijection of $V$}} \mathrm{sign}(\sigma) \prod_{v\in V}(-1)^{\delta_{v, \sigma(v)}+1} \sum_{[x]\in \mathcal{T}/\mathord{\sim}, \sigma_x=\sigma} \widetilde{Wt}(v; [x])\\
&=&\frac{1}{t^{j/2}-t^{-j/2}}  \sum_{[x]\in \mathcal{T}/\mathord{\sim}}\mathrm{sign}(\sigma_x) \prod_{v\in V}(-1)^{\delta_{v, \sigma(v)}+1}\widetilde{Wt}(v;[x])\\
&=&\frac{1}{t^{j/2}-t^{-j/2}}  \sum_{[x]\in \mathcal{T}/\mathord{\sim}}\mathrm{sign}(\sigma_x) \prod_{v\in V}(-1)^{\delta_{v, \sigma(v)}+1}\widehat{Wt}(v;[x])\\
&=&\frac{1}{t^{j/2}-t^{-j/2}}   \sum_{s\in \mathcal{S}} \mathrm{sign}(s) \prod_{v\in V}\widehat{Wt}(v;s)\\
&\overset{\cdot}{=}&\frac{\prod_{\text{$v$: even type}}\{2c(v)\}_q}{q^{2j}-q^{-2j}} \sum_{s\in \mathcal{S}} \mathrm{sign}(s)\prod_{v\in V}Wt(v;s) \quad \quad (t=q^{-4})\\
 &\overset{\cdot}{=}&(\prod_{\text{$v$: even type}}\{2c(v)\}_q) \cdot \underline{\Delta}(G, c).
\end{eqnarray*}
\end{proof}

\bibliographystyle{siam}
\bibliography{bao}

\end{document}